\definecolor{e-mail}{rgb}{0,.40,.80}
\definecolor{reference}{rgb}{.20,.60,.22}
\definecolor{citation}{rgb}{0,.40,.80}
\newtheorem{theorem}{Theorem}[section]
\newtheorem{proposition}[theorem]{Proposition}
\newtheorem{lemma}[theorem]{Lemma}
\newtheorem{corollary}[theorem]{Corollary}
\theoremstyle{definition}
\newtheorem{definition}[theorem]{Definition}
\theoremstyle{remark}
\newtheorem{remark}[theorem]{Remark}
\newtheorem{example}[theorem]{Example}
\numberwithin{equation}{section}
\newcommand{\Ge}{\geqslant}
\newcommand{\ZN}{{\mathbb Z}_{\Ge 0}}
\def \L {\Lambda}
\def \NN {{\mathbb Z}_{\Ge 0}}
\def \L {\mathfrak L}
\definecolor{todo}{rgb}{0,0,1}
\begin{document}
\begin{frontmatter}
\title{On bounds for the effective differential Nullstellensatz}
\author{Omar Le\'on S\'anchez} 
\ead{oleonsan@math.mcmaster.ca} 
\address{Department of Mathematics and Statistics, McMaster University,
1280 Main St W,
Hamilton, ON L8S 4L8, Canada}
\author{Alexey Ovchinnikov\fnref{AO}}
\ead{aovchinnikov@qc.cuny.edu}
\address{Department of Mathematics,
CUNY Queens College,
65-30 Kissena Blvd,
Queens, NY 11367, USA\\
Ph.D. Program in Mathematics, CUNY Graduate Center,
365 Fifth Avenue, New York, New York  10016, USA}

\begin{abstract}
Understanding bounds for the effective differential Nullstellensatz is a central problem in differential algebraic geometry. Recently, several bounds have been obtained using Dicksonian and antichains sequences (with a given growth rate). In the present paper, we make these bounds more explicit and, therefore, more applicable to understanding the computational complexity of the problem, which is essential to designing more efficient algorithms. 
\end{abstract}

\begin{keyword}Effective differential Nullstellensatz\sep antichain sequences
\MSC[2010]{primary 12H05 \sep secondary 14Q20}
\end{keyword}
\fntext[AO]{A. Ovchinnikov was partially supported by the NSF grants CCF-0952591 and DMS-1413859.}

\end{frontmatter}

\section{Introduction}

The effective differential Nullstellensatz problem can be stated as follows: Given a system of algebraic partial differential equations $F=0$ where $F=f_1,\dots,f_s$, can one effectively determine if the system is consistent? In other words, is there an effective procedure to determine if 1 belongs or not to the differential ideal generated by $F$ in the ring of differential polynomials? To determine if 1 belongs to an ideal in a polynomial ring, one can use algebraic effective methods (for instance, \cite{boundsNull1, boundsNull2}). Thus, the problem reduces to finding an effective bound $B$ such that 1 is in the differential ideal generated by $F$ if and only if 1 is in the ideal generated by $F$ and its derivatives of order at most $B$. 

Let us rephrase the above problem in more technical terms. Let $m, n, \ell,  D$ be positive integers. An upper bound for the effective differential Nullstellensatz is an effectively determined function $B=B(m,n,\ell,D)$ that is minimal with respect to the following property: For any differential field $(K,\partial_1,\ldots,\partial_m)$ of characteristic zero with $m$ commuting derivations, and any finite set $F\subset K\{x_1,\dots,x_{n}\}$ of differential polynomials over $K$ in $n$ differential indeterminates of order and degree bounded by $\ell$ and $D$, we have
$$1\in [F] \iff 1\in (F)^{(B)}.$$
Here, $[F]$ denotes the differential ideal generated by $F$, and $(F)^{(B)}$ the ideal generated by $F$ together with its derivatives up to order $B$. 

In order to determine the bound $B$ using a differential elimination algorithm, 
one needs to determine how many differentiation steps the algorithm makes. Determining this number of steps is the main difficulty of the problem. The first attempt to a solution was given by Seidenberg \cite{Seid} in 1956, where it was suggested how this bound could be obtained. In \cite[Theorem 1]{DiffNull}, using bounds on the length of \emph{Dicksonian} sequences, an explicit bound was found in terms of the Ackermann function (see Section \ref{sec:dicksonian} for the recursive definition of this function). More precisely, they proved that 
\begin{equation}\label{oldbound}
B\leq A(m+8,n+\max(n,\ell,D)).
\end{equation}
Recently, in \cite[Theorem 3.4]{DiffNull2}, a better bound was found. More explicitly,
\begin{equation}\label{newbound}
B\leq (n\alpha_{T-1}D)^{2^{O\big(n^3\alpha_T^3\big)}},
\end{equation}
where $\alpha_T=\binom{T+m}{m}$ and $T=T(m,n,\ell)$ is defined below. 

In order to say what the value of $T=T(m,n,\ell)$ is, we need the following terminology. Consider the order $\leq$ on $\ZN^m\times n$ defined as $(\tau,i)\leq (\eta,j)$ iff $i=j$ and $\tau$ is less than or equal to $\eta$ in the product order of $\ZN^m$. To be clear $n=\{0,1,\dots,n-1\}$, and so $(\tau,i)\in\ZN^m\times n$ means that $\tau\in\ZN^m$ and $0\leq i\leq n-1$. If $a\in \NN^m\times n$, we let the degree of $a$ be $\deg a=\deg\tau:=\tau_1+\dots +\tau_m$ where $a=(\tau,i)$ and $\tau=(\tau_1,\dots,\tau_m)$. A sequence $a_1,a_2,\dots$ of $\ZN^m\times n$ is called \emph{Dicksonian} if for all $i<j$ we have $a_i\nleq a_j$. By Dickson's lemma, every Dicksonian sequence is finite. On the other hand, an \emph{antichain} sequence is a Dicksonian sequence with the additional property that $a_i\ngeq a_j$ for $i<j$; in particular, an antichain sequence, as a set, is an antichain of $\ZN^m\times n$ with respect to the order $\leq$. In the case $n=1$, we simply write $\ZN^m$ instead of $\ZN^m \times 1$, so it is clear what is meant by a Dicksonian (or antichain) sequence of $\ZN^m$.

Given a function $f\colon \ZN\to \ZN$, we say that the degree growth of a sequence $a_1,a_2,\dots$ of $\ZN^m\times n$ is bounded by $f$ if $\deg a_i\leq f(i)$ for all $i$. Let $\L_{f,m}^n$ be the maximal length of an antichain sequence of $\ZN^m\times n$ with degree growth bounded by $f$. We then have
$$T(m,n,\ell) =  2^{\L_{f,m}^n+1}\ell, \quad \text{ where } f(i)=2^i\ell.$$ 

The number $T$ first appeared in~\cite[Theorems 4.3 and 4.10]{Pierce} and is related to the axiomatization of the class of differentially closed fields with several commuting derivations (in arbitrary characteristic). Theorem 4.10 of \cite{Pierce} is one of the main tools used to prove the upper bound (\ref{newbound}). However, \cite{Pierce} only dealt with the existence of such a number, and no algorithm to compute it was considered. It is worth mentioning at this point that, in \cite{effectivebounds}, the number $T$ together with \cite[Theorem 4.10]{Pierce}  have also been used to compute Bezout-type estimates for systems of algebraic partial differential equations. There, \cite[\S3]{effectivebounds}, an algorithm to compute $T$ was presented for $m=1,2$.

The goal of this paper is to build an effective algorithm to determine the value of $T$ (we also prove an upper bound in closed form in terms of the Ackermann function, see Example \ref{ex:app1}). In Section~\ref{sec:dicksonian}, we obtain explicit upper bounds for lengths of Dicksonian sequences whose degree growth is bounded by a given function $f$. The proofs of our bounds for Dicksonian sequences are based on the ideas of \cite[Lemma~8]{DiffNull}. However, the proof of the latter contains an error in the way it refers to \cite[Proposition 1.1]{Moreno1}. Here, we correct this error and improve the statements. In Section~\ref{sec:antichains},  upper bounds for the length of antichain sequences are obtained. Furthermore, we provide an explicit recursive algorithm which computes the exact value of the maximal length of antichain sequences; more precisely, of~$\L_{f,m}^n$. Note that our results provide explicit bounds for any number $m$ of derivations, while currently explicit bounds are only known for $m=1,2$. Due to the discussion above, having these explicit bounds is crucial for the effective differential Nullstellensatz (\ref{newbound}) (and for Bezout-type estimates of algebraic PDE's). Of course, it is still desirable to determine how sharp the bound in (\ref{newbound}) is, or how much it can be improved. These are interesting and difficult questions, which we leave for future research,

The type of bounds discussed in this paper have been studied in combinatorics using general versions of Dickson's lemma. Their existence, together with constructive recursive algorithms, appear in \cite{Fig,McA,Moreno,Pierce,Seid}. For instance, in \cite{Moreno}, it is shown that the maximal possible length of Dicksonian sequences (and antichains) is primitive recursive in the bounding function and recursive, but not primitive recursive (if the function increases at least linearly), in $m$. The motivation of our statements is the need to find explicit expressions of such bounds to make them more applicable to designing efficient algorithms, and thus to have a better understanding of the complexity of the differential effective Nullstellensatz (and, consequently, of differential elimination).

\section{Bounds for Dicksonian sequences}\label{sec:dicksonian}
This section contains explicit upper bounds for lengths of Dicksonian sequences with \emph{growth rate} bounded by a given function. We provide several versions of the bounds so that more cases are covered. We start by introducing some terminology.
Let
$$\tau_1 = \left(\tau_1^1,\ldots,\tau_m^1\right),\ldots
,\tau_k = \left(\tau_1^k,\ldots,\tau_m^k\right)
$$
be a  sequence of $m$-tuples of nonnegative integers and $
f: \ZN \to \ZN$
be an arbitrary  function.
\begin{definition}  We say that the {\it max growth} of this sequence is {\it bounded by the function $f$} if, for all $i$, $1\leq i\leq k$, 
\begin{equation*}\label{ineq1}
\max\big\{\tau_1^i,\ldots,\tau_m^i\big\}\leq f(i).
\end{equation*}
We also say that the {\it degree growth} of this sequence is {\it bounded by the function $f$} if, for all $i$, $1\leq i\leq k$, 
\begin{equation*}\label{ineq11}
\deg \tau_i := \tau_1^i+\ldots+\tau_m^i\leq f(i).
\end{equation*}
\end{definition} 
\begin{remark} The sequences with bounded max growth are used in the bounds for the effective differential Nullstellensatz found in \cite{DiffNull}, see (\ref{oldbound}); while sequences with bounded degree growth are used for the improved bounds found in \cite{DiffNull2}, see (\ref{newbound}).
\end{remark}
The Ackermann function, which is used in our bound estimates, is defined as follows (see \cite[\S2]{Moreno1}, for instance):
$$
A(m,n) =
\begin{cases}
   A(0,n)=n+1& n \geq 0,\\
   A(m+1,0)=A(m,1)& m\geq 0,\\
   A(m+1,n+1)=A(m,A(m+1,n))& m,n\geq 0.
  \end{cases}
$$

\begin{proposition}\cite[Proposition 1.1]{Moreno} \label{p1}
   For all $m,h,k\geq 1$, if $\tau_1,\ldots,\tau_k$ is a Dicksonian sequence of $m$-tuples, such 
   that $$\deg \tau_i=h+i-1,\quad 1\leq i\leq k,$$
   then $$k \leq A(m,h-1)-h,$$
   and there exists such a Dicksonian sequence for which this bound is reached.
\end{proposition}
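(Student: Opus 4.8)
The plan is to prove the upper bound $k\le A(m,h-1)-h$ together with its sharpness by induction on the number of coordinates $m$, with an inner induction on $h$. Write $D(m,h)$ for the maximal length of a Dicksonian sequence of $m$-tuples with $\deg\tau_i=h+i-1$ for all $i$; the claim is that $D(m,h)=A(m,h-1)-h$. The base case $m=1$ is immediate: a Dicksonian sequence of $1$-tuples is a strictly decreasing sequence of nonnegative integers, whereas $\deg\tau_i=\tau_i=h+i-1$ is strictly increasing, forcing $k=1$; and since $A(1,n)=n+2$ one has $A(1,h-1)-h=1$, attained by the one-term sequence $\tau_1=(h)$.

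For the inductive step, write $\tau_i=(s_i,\sigma_i)$ with $s_i\in\ZN$ the first coordinate and $\sigma_i\in\ZN^{m-1}$, and record the basic mechanism: if $i<j$ and $s_i\le s_j$, then $\tau_i\nleq\tau_j$ forces $\sigma_i\nleq\sigma_j$. More generally, on any set of indices along which a fixed coordinate (say the $j$-th) is constant, deleting that coordinate turns the sequence into a Dicksonian sequence of $(m-1)$-tuples whose degrees are again governed by a linear bound, but with a shifted intercept; applying the induction hypothesis to such subsequences controls their lengths via $A(m-1,\cdot)$. Writing $\tau_1=(h_1,\ldots,h_m)$ with $h_1+\cdots+h_m=h$, every index $i\ge 2$ satisfies $\tau_j^i<h_j$ for some $j$, so the indices $2,\ldots,k$ are covered by the $h$ strata obtained by fixing, for each $j$ and each value $c\in\{0,\ldots,h_j-1\}$, the $j$-th coordinate to $c$. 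Summing the $(m-1)$-bounds over these $h$ strata and unwinding the resulting recursive inequality reproduces precisely the Ackermann recursion $A(m,h-1)=A(m-1,A(m,h-2))$ (for $h\ge 2$) and $A(m,0)=A(m-1,1)$; equivalently, it proves the closed recursion $D(m,h)=D(m,h-1)+D(m-1,\,h+D(m,h-1))$, whose solution is $A(m,h-1)-h$. Sharpness comes from a matching recursive construction: given an extremal $(m-1)$-dimensional sequence, one stacks translated copies of it on the successive first-coordinate levels $s=h,h-1,h-2,\ldots$, choosing the translations so that the degrees along the whole sequence are consecutive; strict decrease of the first coordinate between consecutive blocks forces the Dickson condition, and the recursion shows the total length meets the bound (for $m=2$ this is just $\tau_i=(h-i+1,\,2(i-1))$, $1\le i\le h+1$).

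The main obstacle is making the inductive step yield the \emph{exact} value $A(m,h-1)-h$, and not merely an Ackermann-type upper bound. The subtlety is that the strata on which a coordinate is constant interleave along the sequence, so a stratum occurring late inherits a large degree budget; bounding each stratum in isolation is far too lossy, because the degrees appearing on it are no longer consecutive. The bookkeeping must instead feed the total length of the $m$-dimensional sequence back into the starting degree of the $(m-1)$-dimensional subproblems in exactly the right way — this is what produces the nested Ackermann structure and pins down the additive constant $-h$. The companion difficulty is verifying that the recursive construction leaves no slack at any level, so that it is genuinely optimal. For the complete argument we follow \cite{Moreno}.
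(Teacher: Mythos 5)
Note first that the paper does not prove this statement at all: it is imported verbatim from \cite[Proposition 1.1]{Moreno}, so the only yardstick for your attempt is whether it stands on its own as a complete argument, and it does not. The parts you do carry out are fine: the base case $m=1$ (where $A(1,n)=n+2$ forces $k=1$), the extremal $m=2$ sequence $\tau_i=(h-i+1,2(i-1))$, and the arithmetic check that the recursion $D(m,h)=D(m,h-1)+D(m-1,\,h+D(m,h-1))$ is solved by $A(m,h-1)-h$ via $A(m,h-1)=A(m-1,A(m,h-2))$. But the decisive step --- showing that the maximal length $D(m,h)$ actually \emph{satisfies} such a recursion, or deriving the exact upper bound in any other way --- is asserted rather than proved. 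The stratification you propose (freezing the $j$-th coordinate at a value $c<h_j$) does not even plug into the induction hypothesis as the proposition is stated: within a stratum the degrees are only strictly increasing, not consecutive, so the $(m-1)$-dimensional statement does not apply to the strata, and summing per-stratum bounds discards the interleaving of strata along the sequence. You identify this obstruction yourself (``far too lossy'', ``the bookkeeping must instead feed the total length of the $m$-dimensional sequence back into the starting degree of the $(m-1)$-dimensional subproblems''), but no bookkeeping is supplied; the closing sentence ``for the complete argument we follow \cite{Moreno}'' concedes that the core of both the upper bound and the optimality claim is being outsourced to the very reference the proposition cites. The sharpness half has the same gap: one must verify that the stacked construction leaves no slack, i.e.\ that its total length satisfies the exact recursion, and this is again deferred.

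If you want a self-contained proof, the naive coordinate-stratification induction is probably not the right vehicle; the known arguments for this exact value go through Macaulay-type control of the Hilbert--Samuel function of the staircase generated by $\tau_1,\dots,\tau_i$ (compressed sets and the map $a\mapsto a^{\langle\gamma\rangle}$), precisely the machinery this paper recalls in Section 3 for the antichain analogue: one shows a degree-lexicographically greedy (compressed) sequence is extremal and then computes its length, which is where the Ackermann recursion genuinely appears. As written, your text is a plausible road map plus consistency checks, not a proof of Proposition \ref{p1}.
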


We will use the following notation:
\begin{itemize}
\item Let $L_{f,m}$ denote the maximal length of a Dicksonian sequence of $m$-tuples with max growth
is bounded by $f$.
\item Let  $l_{f,m}$  denote the maximal length of a Dicksonian sequence of $m$-tuples with degree growth is bounded by $f$.
\item For an increasing function $f:\ZN\to\ZN$, let $\lceil f^{-1}(x)\rceil$ be the least number $k$
such that $f(k)\geq x$.
\end{itemize}

Under certain assumptions on the growth of the function $f$, the following lemmas yield upper bounds for $L_{f,m}$ and $l_{f,m}$ in closed form in terms of the Ackermann function. The idea of the proofs is that if the function $f$ does not grow ``too fast'', one can reduce the problem to the one treated in Proposition~\ref{p1}. This kind of statements has already been considered in \cite[Lemma~8]{DiffNull}; however, the proof of that lemma contains an error in the way it refers to \cite[Proposition 1.1]{Moreno}. Our lemmas below can be considered as a correction and/or improvement of that lemma. The general case (arbitrary function $f$) has been considered in \cite{Moreno1}; an algorithm to compute the value of $l_{f,m}$ is provided there. However, in general, a closed form of this bound is not available. Also, \cite[Theorem~10]{Seid} can be viewed as being more general than Lemma~\ref{lexbound}, but, again, the bounds  are not given explicitly there. Our results below are justified by the convenience of having explicit expressions of the bounds; moreover, such expressions will be used in Section~\ref{subsec:dicksonian}.

\begin{lemma}\label{lexbound} For every increasing function $f:\ZN \to \ZN$ and $d\in\ZN$ such that $$d\cdot f(i+1)\geq (m+d)f(i),\quad i>0,$$ 
if $$(m+d)f(i+1)\leq A(d,d\cdot f(i)-1),\quad i>0,$$ then 
\begin{align}\label{ineq2}
L_{f,m} < \big\lceil f^{-1}\big(A(m+d,(d+m)f(1)-1)/d\big)\big\rceil
\end{align}
\end{lemma}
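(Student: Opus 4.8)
The plan is to reduce the given bound to an application of Proposition~\ref{p1}, following the strategy sketched in the text: if $f$ does not grow too fast, a Dicksonian sequence of $m$-tuples with max growth bounded by $f$ can be transformed into (or compared with) a Dicksonian sequence whose degrees form a consecutive arithmetic progression, so that Proposition~\ref{p1} applies. Let $\tau_1,\ldots,\tau_k$ be a Dicksonian sequence of $m$-tuples with $\max\{\tau^i_1,\ldots,\tau^i_m\}\le f(i)$ for all $i$, and suppose for contradiction that $k \ge \big\lceil f^{-1}\big(A(m+d,(d+m)f(1)-1)/d\big)\big\rceil$.

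First I would pass to an enlarged ambient dimension $m+d$ by padding each $\tau_i$ with $d$ extra coordinates, chosen so as to \emph{inflate} the degrees into a consecutive run. Concretely, set $\sigma_i = (\tau_i, c_i)$ where $c_i \in \NN^d$ is selected so that $\deg\sigma_i = \deg\tau_i + |c_i|$ equals $h + i - 1$ for a suitable base degree $h$; this is possible precisely because the hypothesis $d\cdot f(i+1)\ge (m+d)f(i)$ guarantees that, after subtracting off the (at most $m f(i)$) worth of ``old'' degree, there is always enough room in the $d$ new coordinates to fill the gap up to the next value $h+i$, while keeping each of the $d$ new coordinates' contribution controlled — and one must check the padding can be done so that the sequence $\sigma_1,\ldots,\sigma_k$ remains Dicksonian in $\ZN^{m+d}$ (if $\tau_i \not\le \tau_j$ then certainly $\sigma_i \not\le \sigma_j$, so this direction is automatic once the $c_i$ are legitimate nonnegative tuples). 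The base degree $h$ should be taken around $(d+m)f(1)$ so that $\deg\sigma_1 = h$ dominates $\deg\tau_1$, and the second hypothesis $(m+d)f(i+1)\le A(d, d\cdot f(i)-1)$ is what keeps the construction from running past the degree budget allowed by the growth of $f$ at each step.

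Next, applying Proposition~\ref{p1} to the Dicksonian sequence $\sigma_1,\ldots,\sigma_k$ of $(m+d)$-tuples with $\deg\sigma_i = h + i - 1$ yields $k \le A(m+d, h-1) - h$. Plugging in $h \approx (d+m)f(1)$ gives $k < A(m+d, (d+m)f(1)-1)$. On the other hand, the max growth bound forces $f(k) \ge \max\{\tau^k_1,\ldots,\tau^k_m\}$, and — here is where the divisor $d$ and the inverse $\lceil f^{-1}(\cdot)\rceil$ enter — one rescales: the total degree available is distributed among $m+d$ coordinates of which the last $d$ carry the bulk, so effectively $d\cdot f(k)$ must stay below $A(m+d,(d+m)f(1)-1)$, i.e. $f(k) < A(m+d,(d+m)f(1)-1)/d$, whence $k < \lceil f^{-1}(A(m+d,(d+m)f(1)-1)/d)\rceil$, contradicting the assumption. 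This contradiction establishes \eqref{ineq2}.

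The main obstacle I anticipate is the bookkeeping in the padding step: one must verify simultaneously that (a) the $c_i \in \NN^d$ can always be chosen nonnegative and of the right size to make the degrees exactly consecutive, which is exactly what the inequality $d\cdot f(i+1)\ge (m+d)f(i)$ is engineered to ensure; (b) the resulting sequence is still Dicksonian; and (c) the value of $h$ and the final rescaling by $d$ are compatible with the second hypothesis $(m+d)f(i+1)\le A(d,d\cdot f(i)-1)$ at every index, not just at $i=1$ — this last point is presumably where the earlier erroneous argument in \cite[Lemma~8]{DiffNull} went wrong in citing Proposition~\ref{p1}, and it is the part that will need the most careful treatment. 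A secondary subtlety is making sure the strict inequality in \eqref{ineq2} is genuinely strict, which comes from the $-h$ term in Proposition~\ref{p1} together with the ceiling in the definition of $f^{-1}$.
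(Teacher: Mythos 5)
There is a genuine gap, and it is at the central step. You keep the sequence at its original length $k$ and try to choose pads $c_i\in\ZN^d$ with $\deg\tau_i+|c_i|=h+i-1$; this requires $\deg\tau_i\le h+i-1$ for \emph{every} $i$, with $h$ fixed near $(m+d)f(1)$. But the first hypothesis forces $f(i+1)\ge\frac{m+d}{d}f(i)$, so $f$ grows at least geometrically, and the max-growth bound permits $\deg\tau_i$ to be as large as roughly $m\,f(i)$ (e.g.\ $\tau_i=(f(1)-i,\,f(i))$ for $m=2$ is Dicksonian with max growth bounded by $f$). Such degrees quickly overtake the linearly growing target $h+i-1$, so the required nonnegative $c_i$ need not exist; the inequality $d\,f(i+1)\ge(m+d)f(i)$ does not rescue this, because the problem is that the old degree is too \emph{large}, not too small. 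Moreover, your concluding ``rescaling'' step --- that ``effectively $d\cdot f(k)$ must stay below $A(m+d,(d+m)f(1)-1)$'' --- does not follow from your construction: in your setup the last degree is $h+k-1$, which bears no relation to $d\cdot f(k)$, so the passage through $f^{-1}$ (the whole point of the bound, since $f^{-1}$ is essentially logarithmic here) is unsupported. You also never actually use the second hypothesis; calling it a ``degree budget'' is not an argument.

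The paper's proof diverges precisely where yours breaks down: instead of padding the $k$ tuples in place, it builds a much \emph{longer} Dicksonian sequence of $(m+d)$-tuples. Between the anchor tuples $p_i=(\tau_1^i,\dots,\tau_m^i,f(i),\dots,f(i))$ and $p_{i+1}$ it inserts a block whose first $m$ coordinates are frozen at $\tau_i$ and whose last $d$ coordinates form a Dicksonian sequence of $d$-tuples with degree increasing by exactly $1$, starting from degree $d\,f(i)$. The first hypothesis (together with $\deg\tau_i>0$ for $i<k$) guarantees that the anchor degrees $\deg\tau_i+d\,f(i)$ increase, so the gaps can be filled; the second hypothesis, combined with the sharpness clause of Proposition~\ref{p1} applied to $d$-tuples, guarantees that a filler sequence of the needed length, at most $(m+d)f(i+1)-d\,f(i)$, actually exists. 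Dicksonianness of the combined sequence is checked by cases (same block: last $d$ coordinates; different blocks: first $m$ coordinates). Then Proposition~\ref{p1} bounds the degree of the \emph{last} element, which equals $\deg\tau_k+d\,f(k)\ge d\,f(k)$, by $A(m+d,(m+d)f(1)-1)$, and applying $\lceil f^{-1}(\cdot/d)\rceil$ converts this into the stated bound on $k$. Without the insertion idea, the reduction to Proposition~\ref{p1} and the appearance of $f^{-1}$ and of the divisor $d$ cannot be obtained.
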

\begin{proof}
Let
\begin{align}\label{seq11}
\tau_1 = \left(\tau_1^1,\ldots,\tau_m^1\right),\ldots,\;
\tau_k = \big(\tau_1^k,\ldots,\tau_m^k\big)
\end{align}
be a Dicksonian sequence whose max growth is bounded by $f$. We now construct, from \eqref{seq11}, a new sequence satisfying the conditions of 
Proposition~\ref{p1}. We will use the fact that $\deg \tau_i > 0$ for $1\leq i < k$.
Append to the first tuple $d$ new coordinates, each equal to $f(1)$,
obtaining the following $(m+d)$-tuple:
$$\left(\tau_1^1,\ldots,\tau_m^1,f(1),\ldots,f(1)\right),$$
whose degree is $$\deg \tau_1+ d\cdot f(1)\leq (m+d)f(1).$$
Let $\alpha_1:=\deg\tau_2+d\cdot f(2)-\deg\tau_1-d\cdot f(1)-1$. Note that
$$0\leq \alpha\leq (m+d)f(2)-d\cdot f(1)-1.$$ 
Now, add $\alpha$ many new $(m+d)$-tuples as follows: The first $m$ coordinates of these tuples are
$(\tau_1^1,\ldots,\tau_m^1)$, and the last $d$ coordinates form a Dicksonian
sequence of $d$-tuples, starting with $(f(1),\ldots,f(1))$, with the 
degree growing exactly by 1 at each step. 
From Proposition~\ref{p1} and the condition
$$(m+d)f(2)-d\cdot f(1)\leq A(d,d\cdot f(1)-1)-d\cdot f(1),$$ such a sequence exists. The last tuple will 
have degree equal to $$\deg\tau_2+d\cdot f(2)-1.$$
Next, add the tuple
$$\left(\tau_1^2,\ldots,\tau_m^2,f(2),\ldots,f(2)\right).$$
Continue by adding new $(m+d)$-tuples, whose
first $m$ coordinates are $(\tau_1^i,\ldots,\tau_m^i)$ and last
$d$ coordinates form a Dicksonian sequence with the degree growing by 1 at
each step.
When the tuple
$$\big(\tau_1^{k-1},\ldots,\tau_m^{k-1},f(k-1),\ldots,f(k-1)\big)$$
is reached, consider two cases: (1) if $\deg \tau_k+d\cdot f(k)=\deg \tau_{k-1}+d\cdot f(k-1)$ then stop this construction; (2) otherwise, repeat the construction one more time and stop at $\big(\tau_1^{k},\ldots,\tau_m^{k},f(k),\ldots,f(k)\big)$. 
In both cases, we obtain a sequence of $(m+d)$-tuples in which the 
degree grows by 1 at each step. We will show
that this sequence is Dicksonian.
Suppose that it is not. 
Let $\tau_j$, $\tau_l$, $j<l$, be two $(m+d)$-tuples from this sequence 
for which there exists an $(m+d)$-tuple $\tau$ of nonnegative integers such that $$\tau_l=\tau_j+\tau.$$ 
For an $(m+d)$-tuple $\gamma$, let
 $\gamma'$ and $\gamma''$ denote the first $m$ 
coordinates and the last $d$ coordinates of it, 
respectively. Then we have $$\tau_l'=\tau_j'+\tau'\quad \text{and}\quad 
\tau_l''=\tau_j''+\tau''.$$
If $\tau_j$ and $\tau_l$ have been added after the same tuple 
of the form $$p_i=\left(\tau_1^i,\ldots,\tau_m^i,f(i),\ldots,f(i)\right),$$
or if $\tau_j$ coincides with such a tuple $p_i$ and $\tau_l$ has been 
added after $p_i$, 
the equality $$\tau_l''=\tau_j''+\tau''$$ contradicts the fact that the 
last $d$ coordinates of the tuples between $p_i$ and $p_{i+1}$,
including $p_i$ and excluding $p_{i+1}$, form a Dicksonian sequence.
If $\tau_j$ and $\tau_l$ have been added after {\it different} tuples
$p_i$ and $p_{i'}$, the equality
$$\tau_l'=\tau_j'+\tau'$$  contradicts the fact that sequence
\eqref{seq11} is Dicksonian.
Therefore, our assumption was false and the constructed sequence
is Dicksonian. 


By Proposition~\ref{p1}, the degree
of its last element does not exceed $$A(m+d,\deg\bar a_1+d\cdot f(1)-1)-1< A(m+d, (m+d)f(1)-1),$$ and, moreover, this degree equals $\deg\tau_k+d\cdot f(k)$. Hence, 
$$d\cdot f(k)< A(m+d,(d+m)f(1)-1),$$
and 
$$k<\big\lceil f^{-1}\big(A(m+d,(d+m)f(1)-1)/d\big)\big\rceil.\qedhere$$
\end{proof}

\begin{lemma}\label{lexbound2} For every increasing function $f:\ZN \to \ZN$ and $d\in\ZN$ such that $$f(i+1)\geq (m+1)f(i),\quad i>0,$$ 
if $$(m+1)f(i+1)\leq A(d,f(i)-1),\quad i>0,$$ then 
\begin{align}\label{ineq22}
L_{f,m} < \big\lceil f^{-1}\big(A(m+d,(m+1)f(1)-1)\big)\big\rceil
\end{align} 
\end{lemma}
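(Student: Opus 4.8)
The plan is to re-run the construction in the proof of Lemma~\ref{lexbound} almost verbatim, changing only how the extra coordinates are appended: instead of $d$ copies of $f(i)$, which cost degree $d\cdot f(i)$, I would append a block of $d$ coordinates of total degree only $f(i)$, chosen so that it still carries a long enough interpolating Dicksonian sequence. This substitution is exactly what turns each $d\cdot f(i)$ of Lemma~\ref{lexbound} into $f(i)$, while keeping the ambient dimension $m+d$ --- hence the first argument of the Ackermann function --- unchanged.

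In detail, start from a Dicksonian sequence $\tau_1,\dots,\tau_k$ of $m$-tuples with max growth bounded by $f$, so $\deg\tau_i\le mf(i)$ and $\deg\tau_i>0$ for $1\le i<k$. For each $i$, use Proposition~\ref{p1} to fix an extremal Dicksonian sequence $s^i_1,s^i_2,\dots$ of $d$-tuples with consecutive degrees starting at $f(i)$; it has length $A(d,f(i)-1)-f(i)$, and since
$$\deg\tau_{i+1}+f(i+1)-\deg\tau_i-f(i)\le(m+1)f(i+1)-f(i)\le A(d,f(i)-1)-f(i)$$
by the hypothesis $(m+1)f(i+1)\le A(d,f(i)-1)$, it is long enough for what follows. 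Using $s^i_1$ itself as the appended block avoids any question about its shape: set the anchor $p_i:=\bigl(\tau_1^i,\dots,\tau_m^i,s^i_1\bigr)$, of degree $\deg\tau_i+f(i)\le(m+1)f(i)$, and between $p_i$ and $p_{i+1}$ insert the $\alpha_i:=\deg\tau_{i+1}+f(i+1)-\deg\tau_i-f(i)-1$ many $(m+d)$-tuples whose first $m$ coordinates equal $(\tau_1^i,\dots,\tau_m^i)$ and whose last $d$ coordinates run through $s^i_2,s^i_3,\dots$. The hypothesis $f(i+1)\ge(m+1)f(i)$ forces $\alpha_i\ge0$ (the boundary index $i=k-1$ being handled by the same two-case split as in Lemma~\ref{lexbound}: either stop at $p_{k-1}$, or insert once more and stop at $p_k$), and by construction the degree of the resulting sequence of $(m+d)$-tuples increases by exactly $1$ at each step.

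The remaining two steps are as in Lemma~\ref{lexbound}. First, Dicksonianness: a relation $\tau_l=\tau_j+\tau$ with $j<l$ either has $\tau_j,\tau_l$ sitting after (or equal to) the same anchor $p_i$, in which case $\tau''_l=\tau''_j+\tau''$ contradicts that $s^i_1,s^i_2,\dots$ is Dicksonian, or has them sitting after distinct anchors $p_i,p_{i'}$ with $i<i'$, in which case $\tau'_l=\tau'_j+\tau'$ gives $\tau_i\le\tau_{i'}$, contradicting that $\tau_1,\dots,\tau_k$ is Dicksonian. Second, the constructed sequence has consecutive degrees starting at $\deg\tau_1+f(1)\le(m+1)f(1)$, so Proposition~\ref{p1} together with monotonicity of $A$ in its second argument bounds the degree of its last term by $A\bigl(m+d,(m+1)f(1)-1\bigr)-1$; as that last term is $p_k$ or $p_{k-1}$, its degree is at least $f(k)$, whence $f(k)<A\bigl(m+d,(m+1)f(1)-1\bigr)$, i.e.\ $k<\bigl\lceil f^{-1}\bigl(A(m+d,(m+1)f(1)-1)\bigr)\bigr\rceil$, and since this holds for every such $k$ we obtain \eqref{ineq22}. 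I do not expect a genuine obstacle --- this is a re-run of Lemma~\ref{lexbound} with a cheaper appended block --- the only points needing care being the nonnegativity of the $\alpha_i$ at the right end of the sequence, the fact that $d\ge2$ is exactly what lets the interpolating $d$-tuple sequences increase in degree at all, and keeping the degree bookkeeping with $f(i)$ in place of $d\cdot f(i)$ consistent throughout.
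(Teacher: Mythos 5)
Your proposal is correct and is essentially the paper's own argument: the paper proves this lemma by re-running the construction of Lemma~\ref{lexbound} with the appended $d$-block starting as $(f(i),0,\dots,0)$, i.e.\ of total degree $f(i)$ instead of $d\cdot f(i)$, which is exactly your modification (your choice of starting block from an extremal Dicksonian sequence rather than $(f(i),0,\dots,0)$ is an immaterial variation). The bookkeeping you carry out — $\alpha_i\ge 0$ from $f(i+1)\ge(m+1)f(i)$, availability of the interpolating sequence from $(m+1)f(i+1)\le A(d,f(i)-1)$, and the final bound $f(k)<A(m+d,(m+1)f(1)-1)$ — matches the intended proof.
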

\begin{proof}
The proof follows as in Lemma~\ref{lexbound} where the appended $d$-tuples begin with the form 
$$(\tau_1^i,\dots,\tau_m^i,f(i),0,\ldots,0). \qedhere$$
\end{proof}

\begin{lemma}[cf. Proposition~\ref{antiprop}]\label{lexbound3} For every increasing function $f:\ZN \to \ZN$ and $a,d\in\ZN$ such that $$a\cdot f(i+1)\geq (a+1)f(i),\quad i>0,$$ 
if $$(a+1)f(i+1)\leq A(d,a\cdot f(i)-1),\quad i>0,$$ then 
\begin{align}\label{ineq23}
l_{f,m} < \big\lceil f^{-1}\big(A(m+d,(a+1)f(1)-1)/a\big)\big\rceil
\end{align}
\end{lemma}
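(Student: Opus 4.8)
The plan is to mimic the construction in the proof of Lemma~\ref{lexbound}, but now starting from a Dicksonian sequence whose \emph{degree} growth (rather than max growth) is bounded by $f$, and appending only $a$ new coordinates instead of $d$. So, let $\tau_1,\ldots,\tau_k$ be a Dicksonian sequence of $m$-tuples with $\deg\tau_i\le f(i)$ for all $i$. As before we may assume $\deg\tau_i>0$ for $1\le i<k$ (otherwise $k\le 2$ and the bound is trivially satisfied under the stated hypotheses). First I would replace the single bounding value $f(i)$ used for the appended coordinates in Lemma~\ref{lexbound}: here, since we only control $\deg\tau_i$ and not each coordinate, I append $a$ new coordinates whose \emph{sum} is arranged to be exactly $f(i)-\deg\tau_i$ at the ``landmark'' tuple $p_i$ — more precisely, I take $p_i=(\tau_1^i,\ldots,\tau_m^i,\sigma^i_1,\ldots,\sigma^i_a)$ where $\sigma^i_1+\cdots+\sigma^i_a = f(i)-\deg\tau_i$, so that $\deg p_i = f(i)$ for every $i$. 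The hypothesis $a\cdot f(i+1)\ge (a+1)f(i)$ plays the role that $d\cdot f(i+1)\ge(m+d)f(i)$ played before; it guarantees the number $\alpha_i:=\deg p_{i+1}-\deg p_i-1 = f(i+1)-f(i)-1$ of intermediate tuples to be inserted is non-negative, and the condition $(a+1)f(i+1)\le A(d,a\cdot f(i)-1)$ (after the appropriate bookkeeping) ensures that a Dicksonian sequence of the last $a$ coordinates realizing that many unit-degree steps actually exists by Proposition~\ref{p1}.

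Next I would insert, between consecutive landmark tuples $p_i$ and $p_{i+1}$, exactly $\alpha_i$ tuples whose first $m$ coordinates are frozen at $(\tau_1^i,\ldots,\tau_m^i)$ and whose last $a$ coordinates form a Dicksonian sequence of $a$-tuples with degree increasing by $1$ at each step, running from the value of $p_i$'s last block up to degree $f(i+1)-\deg\tau_i - 1$ (or whatever bound makes the degrees line up). The existence of such a block is precisely where Proposition~\ref{p1} is invoked, and it is the reason for the second hypothesis. The resulting long sequence has degree increasing by $1$ at every step and total length $\ge k$, so once I show it is Dicksonian, Proposition~\ref{p1} applied to $(m+a)$-tuples bounds its terminal degree by $A(m+a,\deg p_1-1)-1 < A(m+a,(a+1)f(1)-1)$; combined with the fact that the last landmark tuple $p_k$ has degree $f(k)$ but also degree $\approx a\cdot(\text{something})$ from the last block, this forces $a\cdot f(k) < A(m+a,(a+1)f(1)-1)$, hence $f(k)<A(m+a,(a+1)f(1)-1)/a$, hence $k<\lceil f^{-1}(A(m+a,(a+1)f(1)-1)/a)\rceil$. (I note the statement writes $A(m+d,\ldots)$; presumably $d$ here is a parameter controlling the intermediate-block construction and one should check whether it should read $m+d$ or $m+a$ — I would reconcile this while writing the details.)

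The Dicksonian-ness argument is the one genuinely non-routine step, and it transcribes almost verbatim from Lemma~\ref{lexbound}. Suppose $\tau_l = \tau_j + \tau$ with $j<l$ for two tuples of the constructed sequence; split each $(m+a)$-tuple into its first-$m$ block $\gamma'$ and last-$a$ block $\gamma''$. If $\tau_j,\tau_l$ lie in the same interval $[p_i,p_{i+1})$ (or $\tau_j=p_i$ and $\tau_l$ lies after it), the relation $\tau_l''=\tau_j''+\tau''$ contradicts that the last-$a$ coordinates form a Dicksonian sequence on that interval; if they lie in different intervals $[p_i,\cdot)$ and $[p_{i'},\cdot)$ with $i<i'$, then $\tau_l'=\tau_j'+\tau'$ reads $\tau^{i'}=\tau^i+\tau'$ (the first-$m$ blocks are the frozen $\tau^i$'s), contradicting that $\tau_1,\ldots,\tau_k$ was Dicksonian. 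The main obstacle — really the only subtle point — is making the degree arithmetic in the last-$a$ block consistent: I must check that with only $a$ coordinates one can still realize every required number $\alpha_i$ of unit-degree Dicksonian steps starting from a prescribed $a$-tuple of prescribed degree, which is exactly the content of Proposition~\ref{p1} together with the hypothesis $(a+1)f(i+1)\le A(d,a f(i)-1)$; getting the off-by-constants right in that inequality (the role of the "$-1$" and of the additive shift $\deg\tau_i$) is the part that needs care, but it is bookkeeping rather than a new idea. Everything else follows the template of Lemma~\ref{lexbound} line by line, which is presumably why the authors will again give only a one-line "the proof follows as in Lemma~\ref{lexbound}" style argument, here pointing to the comparison with Proposition~\ref{antiprop}.
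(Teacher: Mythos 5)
Your construction diverges from the paper's at the decisive point, and the divergence breaks the argument. In the paper's proof one still appends $d$ coordinates (not $a$), and the $i$-th landmark tuple is $\left(\tau_1^i,\ldots,\tau_m^i,\,a\cdot f(i),0,\ldots,0\right)$: the appended $d$-block starts at the \emph{large} degree $a\,f(i)$, independently of $\deg\tau_i$. The two hypotheses then do exactly their jobs. The inequality $a\,f(i+1)\geq (a+1)f(i)$ gives $\deg p_{i+1}=\deg\tau_{i+1}+a\,f(i+1)\geq a\,f(i+1)\geq (a+1)f(i)\geq \deg\tau_i+a\,f(i)=\deg p_i$, and the number of unit-degree steps needed between $p_i$ and $p_{i+1}$ is at most $(a+1)f(i+1)-a\,f(i)$, which by the second hypothesis is at most $A(d,a\,f(i)-1)-a\,f(i)$, i.e.\ precisely the maximal length of a Dicksonian sequence of $d$-tuples starting at degree $a\,f(i)$ furnished by Proposition~\ref{p1}. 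Finally, $\deg p_1\leq (a+1)f(1)$ while the terminal degree is $\deg\tau_k+a\,f(k)\geq a\,f(k)$, which is where both the argument $(a+1)f(1)-1$ and the division by $a$ in \eqref{ineq23} come from. So $d$ counts appended coordinates and $a$ scales degrees; they are genuinely distinct parameters, and the bound really does read $A(m+d,\cdot)$ -- the discrepancy you flagged is not a typo to reconcile but a sign that your construction has merged two different roles.

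Your version -- appending $a$ coordinates whose sum is the slack $f(i)-\deg\tau_i$, so that $\deg p_i=f(i)$ -- fails in two ways. First, your intermediate block must be a Dicksonian sequence of $a$-tuples starting at degree $f(i)-\deg\tau_i$ and running for up to $f(i+1)-f(i)$ unit-degree steps; Proposition~\ref{p1} caps such a run at $A\bigl(a,\,f(i)-\deg\tau_i-1\bigr)-\bigl(f(i)-\deg\tau_i\bigr)$, and when $\deg\tau_i$ is close to (or equal to) $f(i)$ this is tiny, or the starting block is the zero tuple and admits no Dicksonian continuation at all. The hypothesis $(a+1)f(i+1)\leq A(d,a\,f(i)-1)$ cannot rescue this: it concerns $A(d,\cdot)$ evaluated at the large argument $a\,f(i)$, not $A(a,\cdot)$ evaluated at the possibly small slack. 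Second, your concluding step asserts $a\,f(k)<A(m+a,(a+1)f(1)-1)$ ``from the last block,'' but in your construction the terminal degree is exactly $f(k)$, not anything of size $a\,f(k)$; even if the blocks existed you would only get $f(k)<A(m+a,f(1)-1)$, which is not the claimed inequality and has no $1/a$. The repair is not bookkeeping of off-by-one constants: you must keep $d$ appended coordinates and place $a\,f(i)$ in one of them, as the paper's one-line proof indicates.
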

\begin{proof}
The proof follows as in Lemma~\ref{lexbound} where the appended $d$-tuples begin with the form 
$$(\tau_1^i,\dots,\tau_m^i,a\cdot f(i),0,\ldots,0). \qedhere$$
\end{proof}

\begin{example}\label{ex:bounds}
Let $d=2$, $b>1$, and $\ell>0$. Also, let $f(i)=b^i\ell$. Consider the following question: For which values of $b$ is there $a\in \ZN$ satisfying the hypothesis of Lemma~\ref{lexbound3}? We first have the inequality
$$ab^{i+1}\ell\geq (a+1) b^i\ell,$$
which is the same as $ab\geq a+1$. We also have
$$(a+1)b^{i+1}\leq A(2,ab^i-1)=2ab^i+1,\quad \text{ for all } i\geq 0.$$
This is equivalent to $(a+1)b\leq 2a$. From this inequality, we see that
$$b\leq \frac{2a}{a+1} < 2.$$
Moreover, from the above inequalities, we see that for any $1<b<2$, if $a\in\ZN$ is such that 
$$a\geq \frac{1}{b-1} \quad \text{ and }\quad a\geq\frac{b}{2-b},$$
then the hypothesis of Lemma~\ref{lexbound3} are satisfied. Thus, for such values of $a$ and $b$, we have
$$l_{f,m}< \big\lceil \log_{b}(A(m+2,(a+1)b\ell-1)/a\ell)\big\rceil.$$
In particular, if $b=3/2$ and $\ell=2/3$, we can choose $a=3$. In case $m=2$, we get
$$l_{f,2}<\big\lceil \log_{\frac{3}{2}}(A(4,3)/2)\big\rceil=\big\lceil\log_{\frac{3}{2}}((2^{2^{65536}}-3)/2)\big\rceil.$$
\end{example}


\begin{remark}
In the previous example we saw that $1<b<2$. Thus, Lemma~\ref{lexbound3} can only deal with the case $f(i)=2^i\ell$ when $d\geq 3$ (see Example~\ref{ex:app1}). As we saw in the introduction, the increasing function $f(i)=2^i\ell$ plays an important role in the applications of our bounds to the effective differential Nullstellensatz, and so better bounds are desirable. We deal with these issues in the next section.
\end{remark}

\section{Bounds for antichains}\label{sec:antichains}

In this section, we establish explicit bounds for lengths of antichain sequences of tuples of nonnegative integers, which can be used for computations of the bound (\ref{newbound}) obtained in \cite[Theorem 3.4]{DiffNull2} (see Example~\ref{ex:app}). Clearly, every such sequence is a Dicksonian sequence, and so the bounds obtained in the previous section can be applied; however, the goal of this section is to show that in general the maximal length of an antichain sequence is much smaller, and so better bounds can be obtained for the differential Nullstellensatz computations.

Let us recall some of the notation used in the introduction. Let $m$ and $n$ be positive integers\footnote{In this section, we consider $m$-tuples and $n$ copies of $\ZN^m$.}. Consider the order $\leq$ on $\ZN^m\times n$ defined as $(\tau,i)\leq (\eta,j)$ iff $i=j$ and $\tau$ is less than or equal to $\eta$ in the product order of $\ZN^m$. Recall that an antichain sequence of $\ZN\times n$ is a sequence $a_1,\dots,a_k$ such that for all $i\neq j$ we have that $a_i\nleq a_j$. If $a\in \NN^m\times n$, we let the degree of $a$ be $\deg a=\deg\tau$ where $a=(\tau,i)$.

Given $f:\ZN\to\ZN$, we let $\L_{f,m}^n$ be the maximal length of an antichain sequence of $\ZN^m\times n$ with degree growth bounded by $f$. In the following sections we find an upper bound of $\L_{f,m}^n$ in terms of the Ackermann function and, more importantly, we find a recursive algorithm which yields its exact value. Recall that, for a nonnegative integer $\ell$, the number $T=T(m,n,\ell)$ that appears in the bound (\ref{newbound}) is given by 
$$T=  2^{\L_{f,m}^n+1}\ell, \quad \text{ where } f(i)=2^i\ell.$$

\subsection{Using Dicksonian sequences}\label{subsec:dicksonian}
Using the results of Section \ref{sec:dicksonian}, we provide an upper bound for $\L_{f,m}^n$ (for a certain family of functions) in terms of the Ackermann function.

\begin{proposition}\label{antiprop}
Let $f:\NN\to \NN$ be an increasing function such that $f(i+1)\geq 2f(i)$. If $d\in\NN$ is such that 
$$2f(i+1)\leq A(d,f(i)-1),$$
then 
\begin{equation}\label{anb1}
\L_{f,m}^{1}< \big\lceil f^{-1}(A(m+d,2f(1)-1))\big\rceil.
\end{equation}
Moreover, if $d\geq 3$, then, for $n>1$,
\begin{equation}\label{anb2}
\L_{f,m}^{n}< \big\lceil f^{-1}(A(m+d+2,2f(1) n-1)/n)\big\rceil.
\end{equation}
\end{proposition}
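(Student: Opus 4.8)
The plan is to reduce both parts to the Dicksonian bounds from Section~\ref{sec:dicksonian}. For the first inequality \eqref{anb1}, the key observation is that an antichain sequence of $\ZN^m$ (the case $n=1$) is in particular a Dicksonian sequence, so $\L_{f,m}^1 \leq l_{f,m}$, and I would apply Lemma~\ref{lexbound3} with the choice $a=1$. With $a=1$, the first hypothesis $a\cdot f(i+1)\geq (a+1)f(i)$ becomes $f(i+1)\geq 2f(i)$, which is exactly our assumption; the second hypothesis $(a+1)f(i+1)\leq A(d,a\cdot f(i)-1)$ becomes $2f(i+1)\leq A(d,f(i)-1)$, again exactly our assumption. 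The conclusion of Lemma~\ref{lexbound3} with $a=1$ reads $l_{f,m} < \lceil f^{-1}(A(m+d,2f(1)-1))\rceil$, which gives \eqref{anb1}. Actually, one should be slightly more careful: an antichain sequence has the \emph{stronger} incomparability property $a_i \nleq a_j$ for all $i\neq j$ (both directions), not just for $i<j$; but this only makes it easier, since any antichain sequence is a fortiori Dicksonian, so the inequality $\L_{f,m}^1 \le l_{f,m}$ is immediate and the reduction goes through.

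For the second inequality \eqref{anb2}, the idea is to encode an antichain sequence of $\ZN^m\times n$ as a Dicksonian (in fact antichain) sequence of $\ZN^{m+2}$, and then apply the first part with $m$ replaced by $m+2$. Given an antichain sequence $a_1,\dots,a_k$ of $\ZN^m\times n$ with $a_t=(\tau_t,i_t)$ and degree growth bounded by $f$, I would produce a sequence of $(m+2)$-tuples by adjoining to $\tau_t$ two new coordinates that record the ``color'' $i_t\in\{0,\dots,n-1\}$ in a way that is antichain-compatible: namely append $(i_t,\,n-1-i_t)$. If $(\tau_s,i_s)$ and $(\tau_t,i_t)$ were comparable in $\ZN^{m+2}$, say $\tau_s\leq\tau_t$ coordinatewise including the new coordinates, then from the new coordinates $i_s\leq i_t$ and $n-1-i_s\leq n-1-i_t$ we force $i_s=i_t$, and then $\tau_s\leq\tau_t$ in $\ZN^m$ with $i_s=i_t$ contradicts that the original sequence is an antichain. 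So the new sequence is an antichain (hence Dicksonian) in $\ZN^{m+2}$. The degree of each new tuple is $\deg\tau_t + i_t + (n-1-i_t) = \deg\tau_t + (n-1) \leq f(t) + n - 1$, so its degree growth is bounded by $g(i) := f(i) + n - 1$. This is not quite the clean rescaling $f(i)\mapsto nf(i)$ that the stated bound suggests, so I would instead use a scaled encoding: append $(i_t, n-1-i_t)$ but work with the bounding function $nf$ — since $g(i)=f(i)+(n-1)\leq nf(i)$ (using $f(i)\geq 1$ for $i\geq 1$, which holds because $f$ is increasing and positive), the new sequence has degree growth bounded by $nf$. Then apply \eqref{anb1} with $m+2$ in place of $m$ and $nf$ in place of $f$: the hypothesis $f(i+1)\geq 2f(i)$ for $nf$ is immediate, the hypothesis $2nf(i+1)\leq A(d, nf(i)-1)$ must be checked (this is where $d\geq 3$ enters, analogously to the Remark after Example~\ref{ex:bounds}, since we now need the bound to hold with $nf$ rather than $f$), and the conclusion gives $\L_{f,m}^n \leq \L_{nf,m+2}^1 < \lceil (nf)^{-1}(A(m+2+d, 2nf(1)-1))\rceil = \lceil f^{-1}(A(m+d+2, 2nf(1)-1)/n)\rceil$, matching \eqref{anb2} after noting $(nf)^{-1}(x) = f^{-1}(x/n)$.

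The main obstacle I anticipate is matching the exact hypotheses: the proposition as stated assumes only $2f(i+1)\leq A(d,f(i)-1)$, but after the rescaling by $n$ one needs something like $2nf(i+1)\leq A(d,nf(i)-1)$, and it is not automatic that the former implies the latter for general increasing $A(d,\cdot)$ — one must use convexity/growth properties of the Ackermann function at level $d\geq 3$, or absorb the factor $n$ into the appended coordinates differently (e.g. appending $(i_t, n-1-i_t)$ \emph{scaled}, or distributing the color across $d$ coordinates rather than $2$). Resolving this cleanly — deciding exactly how the color information is encoded so that the hypothesis of \eqref{anb1} transfers with the stated constants — is the technical heart of the argument; everything else (the antichain-preservation check, the degree computation, the $f^{-1}$ bookkeeping) is routine.
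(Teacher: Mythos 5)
Your route is the paper's route: for \eqref{anb1}, apply Lemma~\ref{lexbound3} with $a=1$ together with $\L_{f,m}^1\le l_{f,m}$; for \eqref{anb2}, use exactly the embedding $(\xi,i)\mapsto(\xi,i,n-1-i)$ of $\ZN^m\times n$ into $\ZN^{m+2}$, note the image is an antichain with degree at most $f(i)+n-1\le F(i):=n f(i)$, and apply the $n=1$ case with $m+2$ and $F$. The only difference is that you leave open the step you call the ``technical heart'': verifying the second hypothesis for $F$, i.e. $2nf(i+1)\le A(d,nf(i)-1)$. This is a genuine gap in your write-up, but it does not require a different encoding; it is precisely where $d\ge3$ is used, and it closes in one line. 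From the given hypothesis, $2nf(i+1)\le n\,A(d,f(i)-1)$, so it suffices to show $n\,A(d,f(i)-1)\le A(d,nf(i)-1)$. For $d\ge 3$ and $x\ge 1$ one has
\begin{equation*}
A(d,x)=A(d-1,A(d,x-1))\ \ge\ A(2,A(d,x-1))=2A(d,x-1)+3\ \ge\ 2A(d,x-1),
\end{equation*}
so iterating $(n-1)f(i)$ times gives $A(d,nf(i)-1)\ \ge\ 2^{(n-1)f(i)}A(d,f(i)-1)\ \ge\ n\,A(d,f(i)-1)$, since $2^{(n-1)f(i)}\ge 2^{n-1}\ge n$ (using $f(i)\ge1$). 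With that inequality in hand, your application of \eqref{anb1} to $F=nf$ in $\ZN^{m+2}$ and the bookkeeping $(nf)^{-1}(x)=f^{-1}(x/n)$ give \eqref{anb2} exactly as you wrote, and this is the argument in the paper.
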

\begin{proof}
Inequality \eqref{anb1} follows immediately from Lemma \ref{lexbound3} taking $a=1$ and noting that $\L_{f,m}^1\leq l_{f,m}$. For \eqref{anb2}, assume $n>1$. We can embed $\NN^m\times n$ into $\NN^{m+2}$ by 
\begin{equation}\label{emb}
(\xi,i)\mapsto (\xi,\, i,\, n-1-i). 
\end{equation}
Note that, given an increasing function $f:\NN\to\NN$ and a sequence $(a_1,\ldots,a_k)$ of elements in $\ZN^m\times n$, if $\deg a_i \leq f(i)$, then
$$\deg a_i+n-1\leq f(i) + n-1\leq F(i):= f(i)\cdot n.$$
Since $d\geq 3$,  we have
$$
2F(i+1)=2nf(i+1)\le nA(d,f(i)-1)\le A(d,F(i)-1).
$$
Thus, we get that $$F(\L_{F,m+2}^1)<A(m+d+2,2F(1)-1),$$ and so 
$$\L_{F,m+2}^1< \big\lceil f^{-1}(A(m+d+2,2f(1) n-1)/n)\big\rceil.$$
Furthermore, any antichain sequence of $\ZN^m\times n$ whose degree growth is bounded by $f$ yields, by means of the embedding \eqref{emb}, an antichain sequence of $\ZN^{m+2}$ whose degree growth is bounded by $F$. Therefore, $\L_{f,m}^n\leq \L_{F,m+2}^1$, and the result follows.
\end{proof}

\begin{example}\label{ex:app1}
In our case of interest for the applications to the effective differential Nullstellensatz, we have $f(i)=2^i\ell$ and, in this case, we write $\L_{\ell,m}^n$ instead of $\L_{f,m}^n$. In this case, we see that $f(i+1)\geq 2f(i)$ and, if we let $d=3$, we see that 
$$2f(i+1)\leq 2^{f(i)+2}-3=A(3,f(i)-1).$$
Thus, we can apply Proposition \ref{antiprop} to get that
$$2^{\L_{\ell,m}^{1}}\ell < A(m+3, 4\ell-1),$$ 
and, if $n>1$, then
$$2^{\L_{\ell,m}^{n}}\ell < A(m+5, 4n\ell-1)/n.$$
We conclude that the value of $T$ in (\ref{newbound}) satisfies 
$$T < 2\, A(m+3,4\ell-1) \text{ when }n=1$$
and 
$$T<\frac{2}{n}A(m+5,4n\ell-1) \text{ when }n>1.$$
\end{example}

\subsection{Sequence giving the exact bound}
We now provide a recursive algorithm that yields the exact value of $\L_{f,m}^n$. The techniques we use are motivated by the arguments for Dicksonian sequences from \cite[\S4]{Moreno1}.

Clearly, $\L_{f,1}^n=n$. For $m=2$, we have (see   \cite[Lemma~3.8]{effectivebounds})
\begin{equation}\label{eq22}
\L_{f,2}^n=b_n,\quad \text{where } b_0:=0 \text{ and } b_{i+1}:=f(b_i+1)+b_i+1,\ i\ge 0.
\end{equation}
However, for $m>2$, the arguments in \cite[Section~3]{effectivebounds} do not yield the value of $\L_{f,m}^n$. 

For the rest of this section we assume that
\begin{enumerate}
\item [(\dag)] the bound function $f$ is increasing.
\end{enumerate}

\begin{remark}
If $f$ grows at least linearly, Proposition \ref{p1} yields a Dicksonian sequence of $\ZN^m$ of length $A(m,0)-1$ such that the degree grows (by one) at each step. Hence, this Dicksonian sequence is in fact an antichain sequence with degree growth bounded by $f$, and so
$$A(m,0)-1\leq \L_{f,m}^{n}.$$
\end{remark}

Let us now recall the notions of compressed sets and binomial representations (see \cite[\S1]{Moreno1} for more details). Consider the degree-lexicographic order in $\ZN^m$, $\tau\prec \eta$ iff $\deg \tau < \deg \eta$ or $\deg \tau= \deg \eta$ and $\tau$ is less than $\eta$ in the lexicographic order of $\ZN^m$. A subset $M$ of $\ZN^m$ is said to be \emph{compressed} if whenever $\tau, \eta \in \ZN^m$ and $\deg \tau=\deg \eta$ we have
$$\left( \tau \in M \text{ and } \tau\prec \eta\right) \,\implies \,  \eta\in M.$$
For $\gamma>0$, $k<\gamma$, and $a_0\geq a_1\geq \cdots\geq a_k\geq 0$ we define
$$\langle a_0,\dots,a_k\rangle _\gamma=\binom{a_0+\gamma}{\gamma}+\cdots +\binom{a_k+\gamma-k}{\gamma-k}.$$
For each $\gamma>0$, the map $(a_0,\dots,a_k)\mapsto \langle a_0,\dots,a_k\rangle_\gamma$ is an order-preserving bijection between decreasing sequences of $\ZN$ of length at most $\gamma$ (with the lexicographic order) and the positive integers (with the usual order). Thus, for every positive integer the inverse of this map yields a unique decreasing sequence which we call its \emph{$\gamma$-binomial representation}. For every positive integer $a$, we define
$$a^{\langle \gamma\rangle}=\langle a_0,\dots,a_k\rangle_{\gamma+1},$$
where $(a_0,\dots,a_k)$ is the $\gamma$-binomial representation of $a$. We set $0^{\langle\gamma\rangle}=0$. We have the following, if $a_1,a_2,b_1,b_2$ are nonnegative integers such that $b_1\geq a_1,a_2$,
\begin{equation}\label{bineq1}
a_1+a_2\leq b_1+b_2 \; \implies \; a_1^{\langle\gamma\rangle} + a_2^{\langle\gamma\rangle}\leq b_1^{\langle\gamma\rangle} + b_2^{\langle\gamma\rangle}.
\end{equation}

We now consider the analogue of the Hilbert-Samuel function for $\ZN^m$. Given a sequence $\tau=(\tau_1\dots,\tau_k)$ of $\ZN^m$, for each $i=0,1,\dots,k$ we let $H_{\tau}^i:\ZN\to\ZN$ be 
$$H_{\tau}^i(d)=\big|\{\xi\in \ZN^m : \deg \xi=d \text{ and } \xi\ngeq \tau_1,\dots, \tau_i\}\big|.$$
Recall that $\xi\ngeq \tau_j$ means that $\xi-\tau_j$ has at least one negative entry. Now, Macaulay's theorem on the Hilbert-Samuel function (cf. \cite[\S1]{Moreno1}) states that 
\begin{equation}\label{Mac1}
H_\tau^{i}(d+1)\leq H_\tau^{i}(d)^{\langle d\rangle}, \, \text{ for all } i=0,1,\dots,k \text{ and } d\geq 1.
\end{equation}
Moreover, if for some $i\leq k$ the sequence $(\tau_1,\dots,\tau_i)$ is compressed and $\deg \tau_j\leq d$ for all $j\leq i$, then 
\begin{equation}\label{Mac2}
H_\tau^{i}(d+1)=H_\tau^{i}(d)^{\langle d\rangle}.
\end{equation}

Let us now construct an algorithm that yields the values of $\L_{f,m}^n$. We first consider the case $n=1$. Our strategy is to build an appropriate antichain sequence and show that it has maximal length. The algorithm to compute its length will follow from the construction of such a sequence. We construct an antichain sequence as follows:
$$\mu_1=\max_{\prec}\{\xi\in \ZN^m : \deg \xi=f(1) \},$$
and, as long as it is possible, choose
\begin{equation}\label{rec0}
\mu_{i}=\max_{\prec}\{ \xi\in \ZN^m : \deg \xi=f(i) \text{ and } \xi \ngeq \mu_1,\dots,\mu_{i-1} \}.
\end{equation}
Since $f$ is increasing, $\bar \mu:=(\mu_1,\dots,\mu_L)$ is indeed an antichain sequence, and $f$ bounds its degree growth (in fact, $\deg \mu_i=f(i)$). We will show that $L=\L_{f,m}^1$. It is worth mentioning at this point that in \cite[\S4]{Moreno1} the value of $L$ is denoted by $\Omega(m,f)$ and is called the \emph{frontier} of $f$ in $\ZN^m$. 

Let us give a more explicit construction of $\bar \mu$. By the definition of $\prec$, the first element of $\bar \mu$ is 
$$\mu_1=(f(1),0,\dots,0),$$
if $f(1)>0$ (which we might as well assume), the second element is
$$\mu_2=(f(1)-1, f(2)-f(1)+1,0,\dots,0),$$
the third element is 
$$\mu_3=(f(1)-1,f(2)-f(1),f(3)-f(2)+1,0,\dots,0),$$
and so on. The penultimate element is 
$$\mu_{L-1}=(0,\dots,0,1,f(L-1)-1),$$
and the last element is
$$\mu_L=(0,\dots,0,f(L)).$$
More generally, a recursive construction of the sequence $\bar \mu$ is given as follows:
\begin{enumerate}
\item [(i)] if $\mu_{i-1}=(u_1,\dots, u_r,0,\dots,0,u_m)$ with $r<m-1$ and $u_r>0$, then
\begin{equation}\label{rec1}
\mu_i=(u_1,\dots,u_r-1,f(i)-f(i-1)+u_m+1,0,\dots,0)
\end{equation}
\item [(ii)] if $\mu_{i-1}=(u_1,\dots,u_{m-1},u_m)$ with $u_{m-1}>0$, then
\begin{equation}\label{rec2}
\mu_i=(u_1,\dots,u_{m-1}-1,f(i)-f(i-1)+u_m+1).
\end{equation}
\end{enumerate}

 \begin{remark}\label{onH}
From the recursive construction of $\bar \mu$, one sees that, for each $i\leq L$, the sequence $(\mu_1,\dots,\mu_i)$ is compressed and that $H_{\bar \mu}^L(\deg \mu_L)=0$.
\end{remark}

We now aim to show that $L=\L_{f,m}^1$. Let $\tau=(\tau_1,\dots,\tau_\nu)$ be an antichain sequence of $\ZN^m$ with degree growth bounded by $f$. Suppose $\nu\geq L$, we must show that then $\nu\leq L$. The first step is to replace $\tau$ with a more adequate antichain sequence of the same length.

\begin{lemma}\label{reord}
There is an antichain sequence $\eta=(\eta_1,\dots,\eta_\nu)$ with degree growth bounded by $f$, such that $\deg \eta_i\leq \deg \eta_{i+1}$ for $i=1,\dots,\nu$. In fact, one such a sequence can be obtained by reordering $\tau$.
\end{lemma}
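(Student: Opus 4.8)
The plan is to show that \emph{sorting} the antichain sequence $\tau$ by non-decreasing degree produces a sequence $\eta$ that is still an antichain and whose degree growth remains bounded by $f$. The key observation is that the antichain property is insensitive to the order in which the elements appear: if $a_i \nleq a_j$ and $a_j \nleq a_i$ for \emph{all} pairs $i \neq j$ (which is exactly the definition of an antichain sequence given in this section), then this property is preserved under any permutation of the indices. So reordering cannot destroy the antichain property, and the only thing to check is that the degree-growth bound is retained after reordering.

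First I would make precise the reordering: let $\pi$ be a permutation of $\{1,\dots,\nu\}$ such that $\deg \tau_{\pi(1)} \leq \deg \tau_{\pi(2)} \leq \dots \leq \deg \tau_{\pi(\nu)}$, and set $\eta_i := \tau_{\pi(i)}$. The antichain property of $\eta$ is immediate from the symmetric form of the definition, as noted above. For the degree bound, I would argue as follows: since $f$ is increasing (assumption (\dag)), it suffices to show $\deg \eta_i \leq f(i)$ for each $i$. Fix $i$. Among the $\nu$ degrees $\deg \tau_1, \dots, \deg \tau_\nu$, the value $\deg \eta_i$ is the $i$-th smallest. Hence at least $i$ of the original indices $j$ satisfy $\deg \tau_j \leq \deg \eta_i$; in particular there is some index $j_0 \geq i$ with $\deg \tau_{j_0} \geq \deg \eta_i$ — more carefully, since $\deg \eta_i$ is the $i$-th order statistic, there exists $j_0 \geq i$ with $\deg \tau_{j_0} = \deg \eta_i$. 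Then, using that $\tau$ has degree growth bounded by $f$ and that $f$ is increasing,
\[
\deg \eta_i = \deg \tau_{j_0} \leq f(j_0) \leq f(i)
\]
is false in general — wait, $j_0 \geq i$ gives $f(j_0) \geq f(i)$, the wrong direction. So the correct argument needs the opposite: I want an index $\leq i$ realizing $\deg \eta_i$. Re-examining: $\deg \eta_i$ is the $i$-th smallest, so among $\eta_1,\dots,\eta_i$ the largest degree is $\deg \eta_i$, and these are $i$ of the original elements; hence the set $\{j : \deg \tau_j \leq \deg \eta_i\}$ has size at least $i$, but I actually need a \emph{single} small index. The clean fact is: there exist $i$ indices $j_1 < \dots < j_i$ with $\deg \tau_{j_r} \leq \deg \eta_i$ for all $r$; the smallest of these, $j_1$, satisfies $j_1 \leq$ (something), but more to the point $j_i \geq i$, so $\deg \eta_i$ is realized by $\tau_{j_i}$ with $\deg \tau_{j_i} \leq \deg \eta_i$, which is not useful either. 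The honest route is: there are at least $\nu - i + 1$ indices $j$ with $\deg \tau_j \geq \deg \eta_i$ (namely those mapped to positions $i, i+1, \dots, \nu$), so by pigeonhole at least one such $j$ satisfies $j \leq i$? No — $\nu - i + 1$ indices among $\{1,\dots,\nu\}$, the smallest is $\leq i$. That is the point: among any $\nu - i + 1$ elements of $\{1, \dots, \nu\}$, the minimum is at most $i$. So pick $j^* \leq i$ with $\deg \tau_{j^*} \geq \deg \eta_i$; then $\deg \eta_i \leq \deg \tau_{j^*} \leq f(j^*) \leq f(i)$, using $j^* \leq i$ and $f$ increasing. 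This completes the bound.

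The main obstacle — really the only subtle point — is getting this counting step right, i.e.\ identifying that after sorting, the degree in position $i$ is dominated by the degree of some \emph{original} element whose index is at most $i$, so that the increasing function $f$ can be applied in the correct direction. Once that is in place, the proof is a two-line verification. I would also remark that the hypothesis (\dag) is exactly what makes the sorted sequence admissible, and that the length is unchanged because a permutation is a bijection; no element is added or removed, so $\eta$ has the same length $\nu$ as $\tau$.
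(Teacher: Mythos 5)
Your proof is correct and follows essentially the same route as the paper: sort the sequence by non-decreasing degree (the paper extracts successive $\prec$-minima, which has the same effect), note the antichain property is permutation-invariant, and use the same pigeonhole observation that the element in position $i$ after sorting is dominated in degree by some original $\tau_{j}$ with $j\leq i$, so that $\deg\eta_i\leq f(j)\leq f(i)$ by monotonicity of $f$. The only difference is expository: after some false starts you arrive at exactly the counting step the paper states in one line.
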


\begin{remark}
The above lemma does not hold for Dicksonian sequences. This is an important difference between Dicksonian and antichain sequences.
\end{remark}

\begin{proof}
Let $\eta_1=\min_{\prec}\{\tau_1,\dots,\tau_\nu\}$ and $\eta_{i}=\min_{\prec}\{(\tau_1,\dots,\tau_\nu)\setminus (\eta_1,\dots,\eta_{i-1})\}$ for $i=2,\dots,\nu$. Clearly, $\eta=(\eta_1,\dots,\eta_\nu)$ is an antichain sequence (as $\tau$ is). Also, by construction, $$\deg \eta_i\leq \deg \eta_{i+1}.$$ Thus, all that is left to show is that $f$ bounds the degree of $\eta$. To see this, note that, by the definition of $\eta_i$, there must be $1\leq j\leq i$ such that $\deg \eta_i\leq \deg \tau_j$. But since $f$ is assumed to be increasing and it bounds the degree growth of $\tau$, we get $$\deg \eta_i\leq f(j)\leq f(i),$$ as desired.
\end{proof}

Now, let 
$$g(i)=\left\{\begin{array}{lc} \deg \eta_i\, , & \text{for } i\leq \nu \\ \deg \eta_\nu\, , & \text{for } i>\nu \end{array}\right.$$
Clearly, $g$ is an increasing function such that $g(i)\leq f(i)$. Let $\zeta=(\zeta_1,\dots,\zeta_M)$ be the antichain sequence with degree growth bounded by $g$ constructed as in \eqref{rec0}. 

\begin{lemma}
With $\bar \mu=(\mu_1,\cdots,\mu_L)$ and $\zeta=(\zeta_1,\dots,\zeta_M)$ as above, we have~$M\leq~L$.
\end{lemma}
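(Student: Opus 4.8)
The plan is to compare how the Hilbert-Samuel functions of the two sequences constructed by \eqref{rec0} evolve, and to show that $\bar\mu$, which inserts its generators at the larger degrees $f(i)\ge g(i)$, cannot run out before $\zeta$ does. Write $G_i:=H^i_{\bar\mu}$ and $H_i:=H^i_\zeta$. I will prove by induction on $i$ that for every $i$ with $1\le i\le M$ one has $i\le L$ and
$$H_i(d)\le G_i(d)\quad\text{for all }d\ge f(i);$$
taking $i=M$ then gives $M\le L$.

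First I would record the arithmetic needed about the operation $a\mapsto a^{\langle\gamma\rangle}$. It is weakly increasing (put $a_2=b_2=0$ in \eqref{bineq1} and use $0^{\langle\gamma\rangle}=0$), it satisfies $a^{\langle\gamma\rangle}\ge a$ (termwise from the definition, via $\binom{n+1}{r+1}=\binom{n}{r}+\binom{n}{r+1}$), and hence $(p-1)^{\langle\gamma\rangle}\le p^{\langle\gamma\rangle}-1$ for every $p\ge1$ (apply \eqref{bineq1} with $(a_1,a_2,b_1,b_2)=(p-1,1,p,0)$ and $1^{\langle\gamma\rangle}\ge1$). Since $\bar\mu$ and $\zeta$ arise from the same recursion \eqref{rec0} (with $f$, resp.\ $g$), the reasoning behind Remark~\ref{onH} applies to each: every prefix $(\mu_1,\dots,\mu_i)$, $(\zeta_1,\dots,\zeta_i)$ is compressed and, $f$ and $g$ being increasing, the degrees occurring in it are at most $f(i)$, resp.\ $g(i)$. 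Hence \eqref{Mac2} gives $G_i(d+1)=G_i(d)^{\langle d\rangle}$ for $d\ge f(i)$ and $H_i(d+1)=H_i(d)^{\langle d\rangle}$ for $d\ge g(i)$; combined with $a^{\langle\gamma\rangle}\ge a$, the second shows $H_i$ is non-decreasing on $\{d\ge g(i)\}$. Iterating $(p-1)^{\langle\gamma\rangle}\le p^{\langle\gamma\rangle}-1$ and weak monotonicity, one gets that for $g(i)\le\gamma\le\gamma'$, applying $\langle\cdot\rangle_\gamma,\langle\cdot\rangle_{\gamma+1},\dots,\langle\cdot\rangle_{\gamma'-1}$ successively to $H_i(\gamma)-1$ yields a value $\le H_i(\gamma')-1$ (because the same operators applied to $H_i(\gamma)$ yield $H_i(\gamma')$ by \eqref{Mac2}).

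For the inductive step, assume $i\le M$, $i\le L$ and $H_i\le G_i$ on $\{d\ge f(i)\}$, and suppose $i+1\le M$. Since $\zeta$ chooses $\zeta_{i+1}$ at degree $g(i+1)$, we have $H_i(g(i+1))\ge1$; as $g(i)\le g(i+1)\le f(i+1)$ and $H_i$ is non-decreasing on $\{d\ge g(i)\}$, this gives $H_i(f(i+1))\ge1$, whence by hypothesis $G_i(f(i+1))\ge H_i(f(i+1))\ge1$, so $\bar\mu$ can choose $\mu_{i+1}$ at degree $f(i+1)$ and $i+1\le L$. The only element of degree equal to the picking degree lying above the newly inserted generator is that generator itself, so $G_{i+1}(f(i+1))=G_i(f(i+1))-1$ and $H_{i+1}(g(i+1))=H_i(g(i+1))-1$. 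Consequently, evolving $H_{i+1}$ from degree $g(i+1)$ up to $f(i+1)$ by \eqref{Mac2} amounts to applying $\langle\cdot\rangle_{g(i+1)},\dots,\langle\cdot\rangle_{f(i+1)-1}$ to $H_i(g(i+1))-1$, and by the iterated estimate this is $\le H_i(f(i+1))-1$; therefore
$$H_{i+1}(f(i+1))\le H_i(f(i+1))-1\le G_i(f(i+1))-1=G_{i+1}(f(i+1)).$$
For $d\ge f(i+1)\ (\ge g(i+1))$ both $G_{i+1}$ and $H_{i+1}$ satisfy $H(d+1)=H(d)^{\langle d\rangle}$ by \eqref{Mac2}, so weak monotonicity of $\langle\cdot\rangle$ propagates $H_{i+1}\le G_{i+1}$ from $d=f(i+1)$ to all larger $d$. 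The base case $i=1$ is immediate: $\mu_1=(f(1),0,\dots,0)$ and $\zeta_1=(g(1),0,\dots,0)$ with $g(1)\le f(1)$, so $H_1(d)=|\{\xi:\deg\xi=d,\ \xi_1<g(1)\}|\le|\{\xi:\deg\xi=d,\ \xi_1<f(1)\}|=G_1(d)$ for all $d$.

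I expect the real content to lie in the estimate $(p-1)^{\langle\gamma\rangle}\le p^{\langle\gamma\rangle}-1$ and its iteration, i.e.\ that ``delete one element, then grow maximally'' is dominated by ``grow maximally, then delete one''. This is precisely what lets the two constructions be compared even though they insert generators at different degrees $g(i)\le f(i)$; the rest is bookkeeping with \eqref{rec0}, Remark~\ref{onH}, and \eqref{Mac2}.
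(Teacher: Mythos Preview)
Your argument is correct, but it is a genuinely different route from the paper's. The paper proves this lemma by a direct combinatorial injection: from the explicit coordinate recursion \eqref{rec1}--\eqref{rec2} it extracts the structural property that if $(u_1,\dots,u_m)\in\bar\mu$ then for every $(v_1,\dots,v_{m-1})$ coordinatewise below $(u_1,\dots,u_{m-1})$ there is some $v_m\ge u_m$ with $(v_1,\dots,v_m)\in\bar\mu$. An easy induction on $i$ then shows that for every $\zeta_i=(u_1,\dots,u_m)$ there is an element of $\bar\mu$ agreeing in the first $m-1$ coordinates with last coordinate $\ge u_m$; distinct $\zeta_i$'s go to distinct $\mu_j$'s, so $M\le L$. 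No Macaulay, no Hilbert--Samuel machinery.

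Your proof instead transports the analytic method of Proposition~\ref{ontheH} to this comparison: you track $H^i_\zeta$ against $H^i_{\bar\mu}$ and use \eqref{Mac2} on both sides (both prefixes being compressed), together with the key inequality $(p-1)^{\langle\gamma\rangle}\le p^{\langle\gamma\rangle}-1$ to bridge the gap between the insertion degrees $g(i)\le f(i)$. This is sound; the one point worth stating explicitly is that Remark~\ref{onH} applies to $\zeta$ because $g$ is (weakly) increasing and $\zeta$ is built by the same rule \eqref{rec0}, so every prefix of $\zeta$ is compressed and \eqref{Mac2} is available. What you gain is uniformity: the same Hilbert--Samuel comparison engine handles both this lemma and Proposition~\ref{ontheH}, and indeed one could imagine merging the two. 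What the paper's proof buys is transparency and independence from Macaulay's theorem: it is a short, self-contained bijection that makes the monotonicity in $f$ visible at the level of the tuples themselves.
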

\begin{proof}
By the construction of $\bar \mu$ (see \eqref{rec1} and \eqref{rec2}), we have that if $(u_1,\dots,u_m)\in \bar\mu$ and $0\leq v_1\leq u_1,\dots,0\leq v_{m-1}\leq u_{m-1}$, then 
\begin{equation}\label{eq:sharp}
 \text{ there exists }v_m\geq u_m \text{ such that } (v_1,\dots,v_m)\in\bar\mu.
 \end{equation}
We now prove

\noindent \underline{Claim.} For every $(u_1,\dots,u_{m-1},u_m)\in\zeta$, there is $(u_1,\dots,u_{m-1},v_m)\in \bar \mu$ with $v_m\geq u_m$. 

\noindent {\it Proof of Claim.} We proceed by induction on $i=1,\dots,M$. The first element of $\zeta$ is $(g(1),0\dots,0)$, but the first element of $\bar \mu$ is $(f(1),0\dots,0)$ and $g(1)\leq f(1)$, thus, by~\eqref{eq:sharp}, we can find the desired tuple in $\bar \mu$. Now suppose $$\zeta_{i-1}=(u_1,\dots,u_r,0,\dots,0,u_m),\quad r<m-1,\ u_r>0.$$ By induction, there is $v_m\geq u_m$ such that $(u_1,\dots,u_r,0,\dots,0,v_m)\in\bar \mu$. By \eqref{rec1}, $\zeta_i$ is of the form $$(u_1,\dots,u_r -1,\alpha,0,\dots,0), \quad \alpha:=g(i)-u_1-\cdots -u_r+1.$$ Also, by \eqref{rec1}, $$(u_1,\dots,u_r-1,\beta,0\dots,0)\in\bar\mu,\quad \beta:=f(i)-u_1-\cdots - u_r+1.$$ Since $g(i)\leq f(i)$, we have that $\alpha\leq \beta$, and so, by~\eqref{eq:sharp}, we can find the desired tuple in $\bar \mu$. Finally, suppose $$\zeta_{i-1}=(u_1,\dots,u_{m-1},u_m),\quad u_{m-1}>0.$$ By induction, there is $v_m\geq u_m$ such that $(u_1,\dots,u_{m-1},v_m)\in\bar \mu$. By \eqref{rec2}, $\zeta_i$ is of the form $$(u_1,\dots,u_{m-
1}-1,\alpha'),\quad\alpha':=g(i)-u_1-\cdots - u_{m-1}+1.$$ Also, by \eqref{rec2}, $$(u_1,\dots,u_{m-1}-1,\beta')\in\bar\mu,\quad \beta'=f(i)-u_1-\cdots - u_{m-1}+1.$$ Again, since $g(i)\leq f(i)$, we have $\alpha'\leq \beta'$, and so, once again, by $(\eqref{eq:sharp})$ we can find the desired tuple in $\bar\mu$. 
This proves the claim.

The claim implies that every element of $\zeta$ will be accounted for in $\bar \mu$, and so $M\leq L$.
\end{proof}

By the above lemma, $M\leq L\leq \nu$. Thus, it suffices to show that $\nu\leq M$. Note that $\deg \zeta_i=\deg \eta_i$ for all $i\leq M$. We now establish how $H_{\eta}^i$ is related to $H_{\zeta}^i$.

\begin{proposition}\label{ontheH}
For each $i=0,1,\dots,M$ and $d\geq 0$, we have $H_{\eta}^i(d)\leq H_{\zeta}^i(d)$.
\end{proposition}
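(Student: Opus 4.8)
The plan is to prove the inequality $H_{\eta}^i(d)\le H_{\zeta}^i(d)$ by induction on $i$, and within each inductive step, an auxiliary induction on $d$ using Macaulay's theorem. Recall that $H_{\eta}^i(d)$ counts degree-$d$ tuples that are not $\ge$ any of $\eta_1,\dots,\eta_i$, and that by construction $\zeta=(\zeta_1,\dots,\zeta_M)$ is the ``greedy'' antichain sequence for the bound function $g$ defined by $g(i)=\deg\eta_i$ (so $\deg\zeta_i=\deg\eta_i=:d_i$ for all $i\le M$), while $\eta$ is an arbitrary antichain of the same degree profile. The base case $i=0$ is trivial since $H_{\eta}^0=H_{\zeta}^0$ is just the count of all degree-$d$ tuples in $\ZN^m$.

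For the inductive step, assume $H_{\eta}^{i-1}(d)\le H_{\zeta}^{i-1}(d)$ for all $d$. The key point is to compare $H_{\eta}^i$ and $H_{\eta}^{i-1}$: passing from $i-1$ to $i$ removes exactly the tuples that are $\ge\eta_i$ but were not already $\ge$ one of $\eta_1,\dots,\eta_{i-1}$. So I would set $\Delta_{\eta}^i(d):=H_{\eta}^{i-1}(d)-H_{\eta}^i(d)$, the number of degree-$d$ multiples of $\eta_i$ counted in $H_{\eta}^{i-1}$, and similarly $\Delta_{\zeta}^i(d)$. The crucial observations are: (1) for $d<d_i=\deg\eta_i$ we have $\Delta_{\eta}^i(d)=\Delta_{\zeta}^i(d)=0$, so there $H_{\eta}^i(d)=H_{\eta}^{i-1}(d)\le H_{\zeta}^{i-1}(d)=H_{\zeta}^i(d)$; (2) at $d=d_i$ we have $H_{\zeta}^i(d_i)=0$ by the greedy choice of $\zeta_i$ (it is the $\prec$-maximum of the degree-$d_i$ tuples avoiding $\zeta_1,\dots,\zeta_{i-1}$, and by the compressed/Macaulay structure this choice forces $H_{\zeta}^i(d_i)=0$ — this is essentially Remark~\ref{onH} applied to the subsequence $\zeta_1,\dots,\zeta_i$ with bound $g$), hence $H_{\eta}^i(d_i)\ge 0=H_{\zeta}^i(d_i)$ gives the wrong direction, so instead I use $H_{\eta}^i(d_i)\le H_{\eta}^{i-1}(d_i)\le H_{\zeta}^{i-1}(d_i)$ and then observe $H_{\zeta}^{i-1}(d_i)=H_{\zeta}^i(d_i)$ is false in general — so the right statement to nail down is that $H_{\zeta}^i(d_i)$ equals whatever makes the greedy step extremal, namely $0$ only when $\zeta$ terminates; more carefully, $H_{\zeta}^i(d_i)=H_{\zeta}^{i-1}(d_i)-1$ if $\zeta_i$ is not the last available tuple, and in that case $H_{\eta}^i(d_i)\le H_{\zeta}^i(d_i)$ needs $H_{\eta}^i(d_i)\le H_{\zeta}^{i-1}(d_i)-1$, which holds because $H_{\eta}^{i-1}(d_i)\le H_{\zeta}^{i-1}(d_i)$ and $\eta_i$ itself is a degree-$d_i$ tuple avoiding $\eta_1,\dots,\eta_{i-1}$, so $\Delta_{\eta}^i(d_i)\ge 1$.

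Then for $d>d_i$, I would run the auxiliary induction on $d$: assuming $H_{\eta}^i(d-1)\le H_{\zeta}^i(d-1)$, apply Macaulay's inequality \eqref{Mac1} to get $H_{\eta}^i(d)\le H_{\eta}^i(d-1)^{\langle d-1\rangle}$, then monotonicity of the Macaulay operator $a\mapsto a^{\langle d-1\rangle}$ together with $H_{\eta}^i(d-1)\le H_{\zeta}^i(d-1)$ to bound this by $H_{\zeta}^i(d-1)^{\langle d-1\rangle}$, and finally the equality \eqref{Mac2} — valid because $(\zeta_1,\dots,\zeta_i)$ is compressed (Remark~\ref{onH} for the $g$-sequence) and all $\deg\zeta_j\le d_i\le d-1$ — to identify $H_{\zeta}^i(d-1)^{\langle d-1\rangle}=H_{\zeta}^i(d)$. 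Chaining these gives $H_{\eta}^i(d)\le H_{\zeta}^i(d)$, completing both inductions.

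I expect the main obstacle to be the bookkeeping at $d=d_i$: one must argue carefully that the greedy choice of $\zeta_i$ genuinely minimizes $H_{\zeta}^i(d_i)$ among all legal choices, i.e. that $\zeta_i=\max_{\prec}$ among the degree-$d_i$ tuples avoiding $\zeta_1,\dots,\zeta_{i-1}$ kills as many degree-$d_i$ tuples as possible (equivalently, that $H_{\zeta}^i(d_i)$ is as small as $H_{\eta}^i(d_i)$ can be), and to do this cleanly I would invoke the compressedness of $(\zeta_1,\dots,\zeta_{i-1})$ and the fact that $H_{\eta}^{i-1}(d_i)\le H_{\zeta}^{i-1}(d_i)$, then note that removing $\eta_i$ from the count decreases $H_{\eta}^{i-1}(d_i)$ by at least the amount that removing $\zeta_i$ decreases $H_{\zeta}^{i-1}(d_i)$ — which in the greedy/compressed situation is exactly $1$, since $\zeta_i$ is maximal and the only degree-$d_i$ tuple $\ge\zeta_i$ of degree $d_i$ is $\zeta_i$ itself. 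The rest is routine once the monotonicity of the Macaulay operator (which follows from \eqref{bineq1}) and the two Macaulay relations \eqref{Mac1}–\eqref{Mac2} are in hand.
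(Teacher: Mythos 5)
Your proposal is correct and follows essentially the same route as the paper: induction on $i$ with the three regimes $d<\deg\eta_i$, $d=\deg\eta_i$, and $d>\deg\eta_i$, the last handled by an inner induction on $d$ combining Macaulay's inequality \eqref{Mac1} for $\eta$, monotonicity of $a\mapsto a^{\langle d\rangle}$ from \eqref{bineq1}, and the equality \eqref{Mac2} for the compressed greedy sequence $\zeta$. The worry you flag at $d=\deg\eta_i$ is not really needed: since the only degree-$d_i$ tuple $\geq\zeta_i$ (resp.\ $\geq\eta_i$) is $\zeta_i$ (resp.\ $\eta_i$) itself and neither lies above an earlier term of its sequence, both Hilbert--Samuel values drop by exactly one there, which is precisely the paper's step $H^{i+1}(d)=H^{i}(d)-1$.
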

\begin{proof}
We proceed by induction on $i$. For the base case $i=0$, we have
$$H_{\eta}^0(d)=\binom{m-1+d}{d}=H_\zeta^0 (d),$$
which is the number of $m$-tuples of degree $d$. We now proceed with the induction step $i+1$. We have that for $d<g(i+1)=\deg \eta_{i+1}=\deg \zeta_{i+1}$, 
$$H_\eta^{i+1}(d)=H_\eta^{i}(d)\leq H_\zeta^i(d)=H_\zeta^{i+1}(d).$$
For $d=g(i+1)$, we have
$$H_\eta^{i+1}(d)=H_\eta^i(d)-1\leq H_\zeta^i(d)-1=H_\zeta^{i+1}(d).$$
Now let $d\geq g(i+1)$. For this case we follow the strategy of the last part of the proof  of \cite[Proposition 4.3]{Moreno1}. By Macaulay's theorem on the Hilbert-Samuel function (see \eqref{Mac1}), 
\begin{equation}\label{use1}
H_\eta^{i+1}(d+1)\leq H_\eta^{i+1}(d)^{\langle d\rangle},
\end{equation}
As we pointed out in Remark~\ref{onH}, the sequence $(\zeta_1,\dots,\zeta_i)$  is compressed for all $i$, and so the theorem of Macaulay also yields (see \eqref{Mac2}) 
\begin{equation}\label{use2}
H_\zeta^{i+1}(d+1)=H_\zeta^{i+1}(d)^{\langle d\rangle}.
\end{equation}
It then follows, by induction on $d\geq g(i+1)$ and the fact that if $a\leq b$ then $a^{\langle d\rangle}\leq b^{\langle d\rangle}$ (which follows from \eqref{bineq1}), that 
\begin{equation}\label{use3}
H_\eta^{i+1}(d)^{\langle d\rangle}\leq H_\zeta^{i+1}(d)^{\langle d\rangle}.
\end{equation}
Thus, putting \eqref{use1}, \eqref{use2}, and \eqref{use3} together, we get
$$H_\eta^{i+1}(d+1)\leq H_\zeta^{i+1}(d+1),$$
and the result follows.
\end{proof}

By the above proposition, we have that 
$$H_{\eta}^M(\deg \eta_M)\leq H_{\zeta}^M(\deg \zeta_M).$$ 
As we pointed out in Remark~\ref{onH}, we have that $H_\zeta^M(\deg \zeta_M)=0$. Thus,  $H_{\eta}^M(d)=0$ for all $d\geq \deg \eta_M$. Now, if $\nu>M$, then $\deg \eta_{M+1}\geq \deg \eta_{M}$, and this would imply that 
$$H_{\eta}^M(\deg \eta_{M+1})> 0,$$
which contradicts the previous sentence, and so we get $\nu\leq M$. Thus, we have shown the following

\begin{theorem}\label{thebound}
If $f$ is an increasing function, then the antichain sequence $\bar \mu=(\mu_1,\dots,\mu_L)$ built above has the maximal length among antichain sequences of $\ZN^m$ with degree growth bounded by $f$, and so $L=\L_{f,m}^1$.
\end{theorem}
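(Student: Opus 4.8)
The plan is to show that every antichain sequence $\tau=(\tau_1,\dots,\tau_\nu)$ of $\ZN^m$ with degree growth bounded by $f$ has $\nu\le L$; since $\bar\mu$ is itself such a sequence of length $L$, this establishes maximality and hence $L=\L_{f,m}^1$. I would not argue with $\tau$ directly, but pass through two auxiliary sequences. First, using Lemma~\ref{reord}, reorder $\tau$ into an antichain sequence $\eta=(\eta_1,\dots,\eta_\nu)$ of the same length whose degrees are nondecreasing and still bounded by $f$. Next, let $g$ be the increasing function that records $\deg\eta_i$ for $i\le\nu$ (and is constant afterwards), so that $g\le f$ pointwise, and let $\zeta=(\zeta_1,\dots,\zeta_M)$ be the greedy sequence built for $g$ exactly as $\bar\mu$ was built for $f$ in~\eqref{rec0}. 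The inequality $M\le L$ is obtained from the downward-closure property~\eqref{eq:sharp} of $\bar\mu$ together with the explicit recursions~\eqref{rec1}--\eqref{rec2}, by an induction on $i$ that compares $\zeta_i$ and $\mu_i$ coordinatewise, using $g(i)\le f(i)$ at each step.

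It then remains to prove $\nu\le M$, which is the heart of the argument. Following the strategy of \cite{Moreno1}, I would control the Hilbert--Samuel-type counting functions $H_\eta^i$ and $H_\zeta^i$ by showing, by induction on $i=0,1,\dots,M$, that $H_\eta^i(d)\le H_\zeta^i(d)$ for all $d$ (this is Proposition~\ref{ontheH}). The base case $i=0$ is just the count of $m$-tuples of a given degree. For the inductive step I would split into the ranges $d<g(i+1)$, $d=g(i+1)$, and $d\ge g(i+1)$: the first two are elementary comparisons, while the last requires Macaulay's theorem on the Hilbert--Samuel function --- the inequality~\eqref{Mac1} applied to $H_\eta$ and the \emph{equality}~\eqref{Mac2} applied to $H_\zeta$, which is available precisely because each initial segment of $\zeta$ is compressed (Remark~\ref{onH}) --- combined with the monotonicity of $a\mapsto a^{\langle d\rangle}$ coming from~\eqref{bineq1}, run as a nested induction on $d$.

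Finally, I would conclude by noting that $H_\zeta^M(\deg\zeta_M)=0$ (Remark~\ref{onH}), so the proposition forces $H_\eta^M(d)=0$ for all $d\ge\deg\eta_M$; if $\nu>M$ then $\deg\eta_{M+1}\ge\deg\eta_M$ and $\eta_{M+1}\ngeq\eta_1,\dots,\eta_M$ would give $H_\eta^M(\deg\eta_{M+1})>0$, a contradiction, so $\nu\le M$. Chaining $\nu\le M\le L$ yields the theorem. I expect the main obstacle to be Proposition~\ref{ontheH}: the subtlety is that Macaulay's bound is only an inequality for the arbitrary reordered sequence $\eta$, whereas one needs it to hold with equality for the greedy sequence $\zeta$, and this is exactly why the construction~\eqref{rec0} of $\zeta$ (and of $\bar\mu$) must be arranged so that all its initial segments are compressed.
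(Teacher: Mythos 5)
Your proposal follows the paper's own proof essentially step for step: reorder via Lemma~\ref{reord}, pass to the greedy sequence $\zeta$ for the degree function $g$, prove $M\le L$ from \eqref{eq:sharp} with \eqref{rec1}--\eqref{rec2}, and then compare Hilbert--Samuel functions via Macaulay's theorem (inequality \eqref{Mac1} for $\eta$, equality \eqref{Mac2} for the compressed $\zeta$) exactly as in Proposition~\ref{ontheH}, concluding from $H_\zeta^M(\deg\zeta_M)=0$. The only cosmetic difference is the bookkeeping of the final inequalities (the paper assumes $\nu\ge L$ and shows $\nu\le M$ with $M\le L\le\nu$, while you chain $\nu\le M\le L$), which does not change the argument.
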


\subsection{Recursive construction for the length}
We can now give a recursive expression for $\L_{f,m}^1$ by giving such an expression for $L$. We remind the reader that we are working under the assumption that $f$ is increasing. From the recursive construction of $\bar \mu$, we observe that, to find its length $L$, we simply need to keep track of the number of steps in the above construction (cf. \eqref{rec1} and \eqref{rec2}), and note that we stop once we reach the tuple $(0,\dots,0,f(L))$. To do this, we let $i$ denote our counter. Consider $\Psi_{f,m}=\Psi:\NN\times\NN^{m}\to \NN$ given by
$$\Psi(i,(0,\dots,0,u_n))=i$$
with
\begin{align*}\Psi&(i-1,(u_1,\dots,u_r,0,\dots,0,u_m))\\&=\Psi(i,(u_1,\dots,u_r-1,f(i)-f(i-1)+u_m+1,0,\dots,0)),\quad r<m-1, \ u_r>0\end{align*}
 and 
\begin{align*}\Psi(i-1,(u_1,\dots,u_{m-1},u_m))=\Psi(i,(u_1,\dots,u_{m-1}-1,f(i)-f(i-1)+u_m+1)),\quad u_{m-1}>0. 
\end{align*}
Theorem \ref{thebound} yields that

\begin{corollary}\label{corn1}
If $f$ is an increasing function, then 
$$\L_{f,m}^1=\Psi_{m,f}(1,(f(1),0,\dots,0)).$$
\end{corollary}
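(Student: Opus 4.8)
The plan is to recognize that Corollary~\ref{corn1} is essentially a bookkeeping statement: the recursive function $\Psi_{f,m}$ was defined precisely to count the number of steps in the construction of $\bar\mu=(\mu_1,\dots,\mu_L)$, so the content of the proof is to verify that $\Psi$ is well-defined on the tuples that actually appear in the construction and that evaluating it on the initial tuple returns exactly $L$. By Theorem~\ref{thebound} we already know $L=\L_{f,m}^1$, so once we show $\Psi_{f,m}(1,(f(1),0,\dots,0))=L$ we are done.

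First I would observe that the three defining clauses of $\Psi$ mirror, in reverse, the three situations in the recursive description of $\bar\mu$: the terminal clause $\Psi(i,(0,\dots,0,u_m))=i$ corresponds to reaching the last tuple $\mu_L=(0,\dots,0,f(L))$, while the two recursive clauses correspond exactly to the two cases \eqref{rec1} and \eqref{rec2} that produce $\mu_i$ from $\mu_{i-1}$. Concretely, I would prove by downward induction on $i$ (or, equivalently, by induction on $L-i$) the claim that for every $i$ with $1\le i\le L$ we have $\Psi_{f,m}(i,\mu_i)=L$. The base case is $i=L$: since $\mu_L=(0,\dots,0,f(L))$ has all but its last coordinate zero, the terminal clause gives $\Psi_{f,m}(L,\mu_L)=L$. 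For the inductive step, given $1\le i-1<L$, the tuple $\mu_{i-1}$ is not of the terminal form (otherwise the construction would have stopped at step $i-1$, forcing $L=i-1$), so $\mu_{i-1}$ falls into case (i) or case (ii) of the recursive construction; in either case the appropriate recursive clause of $\Psi$ rewrites $\Psi_{f,m}(i-1,\mu_{i-1})$ as $\Psi_{f,m}(i,\mu_i)$, which equals $L$ by the induction hypothesis. Taking $i=1$ and recalling $\mu_1=(f(1),0,\dots,0)$ yields $\Psi_{f,m}(1,(f(1),0,\dots,0))=L=\L_{f,m}^1$.

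The one point requiring a little care — and the main (modest) obstacle — is to confirm that the rewriting in the inductive step is legitimate: that the tuple $\mu_{i-1}$, when it is not terminal, genuinely matches the left-hand side pattern of exactly one recursive clause (with the indicated side conditions $r<m-1,\ u_r>0$ or $u_{m-1}>0$), so that $\Psi$ is unambiguously defined along the trajectory, and that the tuple produced on the right-hand side is precisely $\mu_i$ as given by \eqref{rec1}–\eqref{rec2}. This is immediate from the explicit recursive construction of $\bar\mu$: the dichotomy between a nonzero coordinate before the final slot ($u_r>0$ with $r<m-1$, giving \eqref{rec1}) and a nonzero penultimate coordinate ($u_{m-1}>0$, giving \eqref{rec2}) is exhaustive and mutually exclusive for every non-terminal tuple appearing in $\bar\mu$, and in each case the formula for $\mu_i$ literally coincides with the argument fed to $\Psi$ on the right-hand side once one substitutes $\deg\mu_{i-1}=f(i-1)$, i.e. $u_1+\dots+u_r+u_m=f(i-1)$ (resp. $u_1+\dots+u_{m-1}+u_m=f(i-1)$). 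Hence $\Psi_{f,m}$ terminates after finitely many rewrites and outputs $L$, which completes the proof.
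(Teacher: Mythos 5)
Your proof is correct and follows essentially the same route as the paper, which states the corollary as an immediate consequence of Theorem~\ref{thebound} together with the fact that $\Psi_{f,m}$ was defined to replicate the recursion \eqref{rec1}--\eqref{rec2} and so counts the length $L$ of $\bar\mu$. Your downward induction showing $\Psi_{f,m}(i,\mu_i)=L$ simply makes explicit the bookkeeping the paper leaves implicit.
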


\begin{remark}\label{fortwo}
A straightforward computation shows that, if $m=2$, then 
$$\L_{f,2}^1=\Psi(1,(f(1),0))=f(1)+1,$$ 
which is what one expects. 
\end{remark}

We now extend this recursive expression to $n>1$. As in the case $n=1$, we recursively build an antichain sequence of $\ZN^m\times n$ of maximal length. Again, we assume that $f$ is increasing. Let $\bar \mu^{(1)}$ be the antichain sequence with degree growth bounded by $f_0(x):=f(x)$ constructed in \eqref{rec0} inside of $\ZN^m\times\{0\}$. Let $L_1$ denote the length of $\bar \mu^{(1)}$; thus, $\bar\mu^{(1)}$ is of the form $$((\mu^{(1)}_1,0),\dots,(\mu^{(1)}_{L_1},0)).$$ Similarly, let $\bar \mu^{(2)}$ be the antichain sequence with degree growth bounded by $f_1(x):=f(x+L_1)$ constructed in \eqref{rec0} inside of $\ZN^m\times\{1\}$, and let $L_2$ be the length of $\bar\mu^{(2)}$. Then, $$\bar\mu^{(2)}=((\mu^{(2)}_1,1),\dots,(\mu^{(2)}_{L_2},1)).$$ Continuing in this fashion, we build $\bar \mu^{(i)}$ for $i=3,\dots n$ as the antichain sequence with degree bounded growth bounded by $$f_{i-1}(x)=f(x+L_1+\cdots+ L_{i-1})$$ constructed in \eqref{rec0} inside of $\ZN^m\times\{i-1\}$. It is easy 
to 
check that if $\bar \mu$ is the concatenation of $\bar \mu^{(1)},\dots,\bar \mu^{(n)}$, then $\bar \mu$ is an antichain sequence of $\ZN^m\times n$ with 
degree growth bounded by $f$. 

To prove that $\L_{f,m}^n=L_1+\dots+ L_n$, we will need the following technical lemma.

\begin{lemma}\label{techlem1}
Suppose $a_1,\dots,a_r$ and $b_1,\dots,b_s$ are sequences of nonnegative integers such that $b_1=\cdots=b_{s-1}\geq b_s$ and $b_1\geq a_i$ for all $i\leq r$. If $a_1+\cdots+a_r\leq b_1+\cdots+b_s$, then, for every $\gamma>0$, we have that
$$a_1^{\langle\gamma\rangle}+\cdots+a_r^{\langle\gamma\rangle}\leq b_1^{\langle\gamma\rangle}+\cdots+b_s^{\langle\gamma\rangle}.$$
\end{lemma}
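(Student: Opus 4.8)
The plan is to prove the inequality by reducing the left-hand sequence to a canonical ``concentrated'' shape using only the two-term inequality \eqref{bineq1}, and then comparing two concentrated sequences by hand. Throughout, $\gamma>0$ is fixed, and I use the monotonicity $x\le y\Rightarrow x^{\langle\gamma\rangle}\le y^{\langle\gamma\rangle}$, which follows from \eqref{bineq1}, together with $0^{\langle\gamma\rangle}=0$. One may assume $b_1>0$, as otherwise the hypotheses force all $a_i$ and all $b_j$ to vanish. Put $A=a_1+\cdots+a_r$ and $B=b_1+\cdots+b_s=(s-1)b_1+b_s$, so that $A\le B$.

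\textbf{Step 1: concentrating the $a_i$.} Call an index \emph{interior} if its entry lies strictly between $0$ and $b_1$. As long as two distinct entries $c\le c'$ are interior, I would replace them by the pair $\{\min(c+c',b_1),\ \max(0,c+c'-b_1)\}$, keeping the other entries fixed; the two new entries again lie in $[0,b_1]$ and have the same sum $c+c'$. Since $\min(c+c',b_1)\ge c'\ge c$, this replacement is an instance of \eqref{bineq1} (with the larger of the two new entries in the role of ``$b_1$'' there), so $\sum_i a_i^{\langle\gamma\rangle}$ does not decrease. Moreover at least one of the two new entries equals $0$ or $b_1$, so the number of interior entries strictly decreases. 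Iterating, after finitely many steps one reaches a sequence in which at most one entry is interior; discarding zeros and reordering, this sequence is $(b_1,\dots,b_1,t)$ with $p$ copies of $b_1$ and $0\le t<b_1$, where necessarily $pb_1+t=A$. By construction, $\sum_i a_i^{\langle\gamma\rangle}\le p\,b_1^{\langle\gamma\rangle}+t^{\langle\gamma\rangle}$.

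\textbf{Step 2: comparing concentrated sequences.} It then remains to show $p\,b_1^{\langle\gamma\rangle}+t^{\langle\gamma\rangle}\le (s-1)b_1^{\langle\gamma\rangle}+b_s^{\langle\gamma\rangle}$ given $pb_1+t\le (s-1)b_1+b_s$, $0\le t<b_1$, and $0\le b_s\le b_1$. I would split into three cases. If $p\le s-2$, then $p\,b_1^{\langle\gamma\rangle}+t^{\langle\gamma\rangle}\le(p+1)b_1^{\langle\gamma\rangle}\le(s-1)b_1^{\langle\gamma\rangle}$, which is at most the right-hand side. If $p=s-1$, the hypothesis forces $t\le b_s$, and monotonicity closes the case. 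If $p\ge s$, then $pb_1\le pb_1+t\le(s-1)b_1+b_s\le sb_1\le pb_1$, so every inequality is an equality; hence $t=0$, $b_s=b_1$, $p=s$, and both sides equal $s\,b_1^{\langle\gamma\rangle}$. Chaining Step 1 and Step 2 yields the lemma.

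I expect the main obstacle to be Step 1: one has to realize that pushing the $a_i$ towards the extremes (either $0$ or the common bound $b_1$) is the correct move, verify that every transfer is a legitimate application of \eqref{bineq1} — this is precisely where the hypothesis $b_1\ge a_i$ is used, guaranteeing that $b_1$ is never exceeded and can serve as the dominating term — and confirm termination via the ``interior-count strictly decreases'' bookkeeping. By contrast, the hypothesis $b_1=\cdots=b_{s-1}\ge b_s$ enters only in Step 2, where it identifies the target as exactly the $b_1$-concentrated sequence of sum $B$; the one mildly delicate point there is the degenerate case $p\ge s$, which forces $p=s$, $t=0$, and $b_s=b_1$.
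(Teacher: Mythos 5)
Your argument is correct, but it takes a genuinely different route from the paper. The paper proves the lemma by induction on the pair $(r,s)$ in the lexicographic order, splitting on whether $b_s\geq a_r$ or $b_s<a_r$ and peeling off one term at a time using \eqref{bineq1} together with the superadditivity $a^{\langle\gamma\rangle}+b^{\langle\gamma\rangle}\leq(a+b)^{\langle\gamma\rangle}$, with the cases $r=1$ and $s=1$ serving as base cases. You instead run an exchange (smoothing) argument entirely on the $a$-side: each merge of two interior entries $c\le c'$ into the pair $\{\min(c+c',b_1),\,\max(0,c+c'-b_1)\}$ is indeed a legitimate instance of \eqref{bineq1} (the sum is preserved and the dominating new entry satisfies $\min(c+c',b_1)\geq c'\geq c$), the interior count strictly drops so the process terminates, and the resulting concentrated sequence $(b_1,\dots,b_1,t)$ with $pb_1+t=A$ is compared to $(b_1,\dots,b_1,b_s)$ by a short three-case check; your handling of the degenerate case $p\geq s$, where all inequalities collapse to equalities, is correct, as is the reduction to $b_1>0$. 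What your approach buys is a transparent extremal picture — among sequences with entries capped by $b_1$ and fixed sum, the $b_1$-concentrated one maximizes $\sum_i a_i^{\langle\gamma\rangle}$ — and it uses only \eqref{bineq1}, monotonicity, and $0^{\langle\gamma\rangle}=0$, with a termination count replacing the double induction. What the paper's induction buys is lighter bookkeeping: no normal form or termination argument is needed, and each inductive step is a single application of \eqref{bineq1} or superadditivity.
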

\begin{proof}
We may assume that $a_1\geq \cdots\geq a_r$. We proceed by induction on $(r,s)$ using the lexicographic order. By \eqref{bineq1}, if $a\leq b$ then $a^{\langle\gamma\rangle}\leq b^{\langle\gamma\rangle}$. The case $r=1$ follows from this observation. It also follows from \eqref{bineq1} that $a^{\langle\gamma\rangle}+b^{\langle\gamma\rangle}\leq (a+b)^{\langle\gamma\rangle}$, and the case $s=1$ follows from this. Thus, we assume that $r,s>1$. We now consider two cases:

\noindent \underline{Case 1.} Suppose $b_s\geq a_r$. Then the sequences $a_1,\dots,a_{r-1}$ and $b_1,\dots,b_{s-1},b_s-a_r$ satisfy our hypothesis. By induction, 
 $$a_1^{\langle\gamma\rangle}+\cdots+a_{r-1}^{\langle\gamma\rangle} \leq b_1^{\langle\gamma\rangle}+\cdots+b_{s-1}^{\langle\gamma\rangle}+ (b_s-a_r)^{\langle \gamma\rangle}.$$
Using that $a_r^{\langle\gamma\rangle}+(b_s-a_r)^{\langle\gamma\rangle}\leq b_s^{\langle\gamma\rangle}$ (which follows from \eqref{bineq1}), we get the desired inequality for the original sequences.

\noindent \underline{Case 2.} Suppose $b_s< a_r$. When $s=2$, we must have that $a_1+\cdots +a_{r-1}\leq b_1$, and so
$$a_1^{\langle\gamma\rangle}+\cdots+a_r^{\langle\gamma\rangle}\leq (a_1+\cdots +a_{r-1})^{\langle\gamma\rangle}+a_r^{\langle\gamma\rangle}\leq b_1^{\langle\gamma\rangle}+b_2^{\langle\gamma\rangle},$$
where the latter inequality follows from \eqref{bineq1}. So we assume that $s>2$. If it happens that $a_1+\cdots+a_{r}\leq b_1+\cdots+b_{s-1}$, then we are done by induction. So we can assume that
\begin{equation}\label{cas}
a_1+\cdots+a_{r-1} > b_1+\cdots+b_{s-2}.
\end{equation}
We have that
$$b_{s-1} + b_s\geq a_r +\left(a_1+\cdots +a_{r-1}-b_1-\cdots -b_{s-2}\right).$$
It follows from \eqref{bineq1}, and using \eqref{cas}, that
$$b_{s-1}^{\langle\gamma\rangle} +b_s^{\langle\gamma\rangle} \geq a_r^{\langle\gamma\rangle} + \left(a_1+\cdots +a_{r-1}-b_1-\cdots -b_{s-2}\right)^{\langle\gamma\rangle}.$$
Thus, it suffices to see that 
$$b_1^{\langle\gamma\rangle}+\cdots + b_{s-2}^{\langle\gamma\rangle} + \left(a_1+\cdots +a_{r-1}-b_1-\cdots -b_{s-2}\right)^{\langle\gamma\rangle}\geq a_1^{\langle\gamma\rangle}+\cdots +a_{r-1}^{\langle\gamma\rangle},$$
but this follows by induction.
\end{proof}

\begin{proposition}\label{forn}
If $f$ is an increasing function, then the above antichain sequence $\bar \mu$ of $\ZN^m \times n$ has the maximal length among the antichain sequences of $\ZN^m\times n$ with degree growth bounded by $f$. In particular, $\L_{f,m}^n=L_1+\cdots+L_n$.
\end{proposition}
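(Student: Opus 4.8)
The plan is to mimic the structure of the $n=1$ case, now keeping track of the "levels" $\ZN^m\times\{0\},\dots,\ZN^m\times\{n-1\}$ separately. We already know $\bar\mu$ is an antichain sequence with degree growth bounded by $f$, so the content is maximality. Let $\tau=(\tau_1,\dots,\tau_\nu)$ be an arbitrary antichain sequence of $\ZN^m\times n$ with degree growth bounded by $f$; we must show $\nu\le L_1+\cdots+L_n$. First I would apply the analogue of Lemma~\ref{reord} to reorder $\tau$ into an antichain sequence $\eta=(\eta_1,\dots,\eta_\nu)$ with nondecreasing degrees and degree growth still bounded by $f$ (the same min$_\prec$-greedy reordering works verbatim, since the degree function ignores the $n$-coordinate). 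Writing $\eta_i=(\xi_i,c_i)$, the key observation is that because $\eta$ is an antichain, for each fixed level $c\in n$ the subsequence of $\xi_i$ with $c_i=c$ is an antichain sequence of $\ZN^m$; and since degrees are nondecreasing, all entries at level $c$ occupy a contiguous block of indices. Let $\nu_0,\dots,\nu_{n-1}$ be the sizes of these blocks, so $\nu=\nu_0+\cdots+\nu_{n-1}$.

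Next I would argue level by level that $\nu_0+\cdots+\nu_{j}\le L_1+\cdots+L_{j+1}$ for each $j$, by induction on $j$. For $j=0$: the level-$0$ block is an antichain sequence of $\ZN^m$ whose $i$-th element has degree at most $f(i)=f_0(i)$, so by Theorem~\ref{thebound} (applied with the bound function $f_0$) its length $\nu_0$ is at most $L_1=\L^1_{f_0,m}$. For the inductive step, suppose the first $j{+}1$ blocks have total size at most $L_1+\cdots+L_{j+1}$. The level-$(j{+}1)$ block starts at some index $p=\nu_0+\cdots+\nu_j+1$, so its $t$-th element has degree at most $f(p+t-1)\le f\big((L_1+\cdots+L_{j+1})+t\big)=f_{j+1}(t)$ using monotonicity of $f$ and the inductive bound on $p-1$. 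Hence this block is an antichain sequence of $\ZN^m$ with degree growth bounded by $f_{j+1}$, and Theorem~\ref{thebound} gives $\nu_{j+1}\le L_{j+2}$; adding to the inductive hypothesis closes the step. Taking $j=n-1$ yields $\nu\le L_1+\cdots+L_n$, which is exactly the length of $\bar\mu$, so $\bar\mu$ is maximal and $\L^n_{f,m}=L_1+\cdots+L_n$.

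The main subtlety — and this is presumably why Lemma~\ref{techlem1} is stated just before the proposition — is that the naive "each level is independent" argument above is slightly too optimistic: after reordering, one must be sure that the degree-nondecreasing reordering does not force elements of a given level out of a contiguous block, and more importantly that the degree bound inherited by the level-$(j{+}1)$ block is really $f_{j+1}$ as defined (i.e. the shift by $L_1+\cdots+L_{j+1}$, not by $\nu_0+\cdots+\nu_j$). Resolving this cleanly requires the Macaulay/Hilbert–Samuel comparison as in the $n=1$ proof: rather than comparing block lengths directly, one compares the associated Hilbert functions $H^i_\eta$ with those of the model sequence $\bar\mu$, and Lemma~\ref{techlem1} is the combinatorial inequality on binomial representations (a multi-term strengthening of \eqref{bineq1}) needed to push the induction through when several elements at one level are "absorbed" against the tail of $\bar\mu$ at an earlier level. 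So the honest proof replaces the hand-wavy block-counting of the second paragraph with: build the model antichain $\bar\mu$, define $g$ and $\zeta$ as in the $n=1$ case adapted to the graded-by-level setting, prove $M\le L_1+\cdots+L_n$ via the Claim (using \eqref{eq:sharp} level by level), then prove $H^i_\eta\le H^i_\zeta$ by induction on $i$ using \eqref{Mac1}, \eqref{Mac2} and Lemma~\ref{techlem1}, and finally derive $\nu\le M$ from $H^M_\eta(\deg\eta_M)=0$ exactly as before. I expect the Hilbert-function comparison across level boundaries — where the recursion for $\bar\mu^{(i)}$ restarts with a fresh bound function $f_{i-1}$ — to be the one genuinely delicate point, and the role of Lemma~\ref{techlem1} is precisely to handle it.
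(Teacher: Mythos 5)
The block-decomposition argument in your second paragraph has a genuine gap: after the degree-nondecreasing reordering of Lemma~\ref{reord}, there is no reason that the elements lying in a fixed level $\ZN^m\times\{c\}$ occupy a contiguous block of indices, and in general no reordering can make them contiguous while preserving the bound $\deg\eta_i\le f(i)$. For instance, if level $0$ contributes elements of degrees $f(1)$ and $f(4)$ at positions $1$ and $4$, and level $1$ contributes degrees $f(2)$ and $f(3)$ at positions $2$ and $3$, then either block order forces an element of degree $f(4)$ (resp.\ $f(2)$) into position $2$ (resp.\ $1$), violating the bound when $f$ is strictly increasing. Without contiguity, the $t$-th element of the level-$c$ subsequence sits at a global index possibly much larger than the count you use, so the inequality $\deg\le f_{j+1}(t)$ needed to invoke Theorem~\ref{thebound} levelwise does not follow; indeed an individual level may well be longer than the corresponding $L_j$, and the whole content of the proposition is that the global trade-off between levels still favours the block configuration $\bar\mu$. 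So the ``subtlety'' you flag is not a bookkeeping issue but the entire difficulty, and the per-level counting cannot be patched directly.

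Your third paragraph correctly identifies the repair --- compare Hilbert--Samuel functions of the reordered sequence with those of the model sequence, levelwise, via \eqref{Mac1}, \eqref{Mac2} and Lemma~\ref{techlem1} --- and this is indeed the route the paper takes (the paper does not even build a separate $\zeta$ for $n>1$: it normalizes so that $\deg a_i=\deg\mu_i=f(i)$ and compares with $\bar\mu$ itself). But as written this is an outline, not a proof, and the point you leave open is exactly the one that makes Lemma~\ref{techlem1} applicable: by the construction of $\bar\mu$, at every stage $i$ and at the relevant degrees $d\ge f(i+1)$, the per-level functions $H^{i,j}_{\bar\mu}(d)$ are all either $0$ (levels already exhausted) or the full value $\binom{m-1+d}{d}$ (levels not yet started), except for at most the single level currently being filled. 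This realizes the hypothesis $b_1=\cdots=b_{s-1}\ge b_s$ and $b_1\ge a_i$ of Lemma~\ref{techlem1}, with the $a_i$'s being the values $H^{i+1,j}_{\bar a}(d)$ and the sum hypothesis being the induction hypothesis $H^{i+1}_{\bar a}(d)\le H^{i+1}_{\bar\mu}(d)$; combining this with \eqref{Mac1} applied levelwise to $\bar a$ and \eqref{Mac2} applied levelwise to $\bar\mu$ (each $(\mu^{(j)}_1,\dots,\mu^{(j)}_i)$ being compressed) yields $H^{i+1}_{\bar a}(d+1)\le H^{i+1}_{\bar\mu}(d+1)$, after which the conclusion follows from $H^{L}_{\bar\mu}(\deg\mu_L)=0$ as in the $n=1$ case. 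Until this structural fact about $\bar\mu$ and the resulting double induction are actually carried out, your proposal remains a plan rather than a proof.
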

\begin{proof}
First enumerate $\bar \mu=(\mu_1,\dots,\mu_L)$, where $L=L_1+\cdots+L_n$. Let $\bar a=(a_1,\dots,a_\nu)$ be an antichain sequence of $\ZN^m\times n$ of degree growth bounded by $f$. Suppose $\nu\geq L$, we must show that $\nu\leq L$. We assume, by reordering $\bar a$ if necessary (as in Lemma~\ref{reord}), that 
$$\deg a_i\leq \deg a_{i+1},\quad i=1,\dots,\nu.$$ 
We also assume, by replacing $f$ and $\bar \mu$ if necessary (as in the discussion after Lemma \ref{reord} and prior to Proposition~\ref{ontheH}), that $\deg a_i=\deg \mu_i=f(i)$ for all $i=1,\dots,L$. 

Given a sequence $\bar b=(b_1,\dots,b_k)$ of $\ZN^m\times n$, for each $i=0,1,\dots,k$ we let $H_{\bar b}^i:\ZN\to\ZN$ be 
$$H_{\bar b}^i(d)=\big|\{c\in \ZN^m\times n : \deg c=d \text{ and } c\ngeq b_1,\dots, b_i\}\big|.$$
If, for each $1\leq j\leq n$, we let $H_{\bar b}^{i,j}$ be the Hilbert-Samuel function of the subsequence of $\bar b$ consisting of its elements in the $j$-copy of $\ZN^m$ (i.e., inside of $\ZN^m\times\{j-1\}$), then clearly
\begin{equation}\label{inth}
H_{\bar b}^i(d)=H_{\bar b}^{i,1}(d)+\cdots+H_{\bar b}^{i,n}(d).
\end{equation}
As in Remark \ref{onH}, by the construction of $\bar\mu$, we have that $H_{\bar \mu}^L(\deg \mu_L)=0$. Moreover, when $L_0+\cdots +L_{j-1} \leq i \leq L_0+\cdots +L_{j}$  (where $L_0=0$), by the construction of $\bar \mu$, we have that, for all $d\geq 0$,
\begin{align*}
0=H_{\bar\mu}^{i,1}(d)=\cdots=H_{\bar\mu}^{i,j-1}(d)\leq H_{\bar \mu}^{i,j}(d) \leq H_{\bar\mu}^{i,j+1}(d)=\cdots =H_{\bar\mu}^{i,n}(d)=\binom{m-1+d}{d},
\end{align*}
where the latter is the number of $m$-tuples of degree $d$. We now claim that, for all $i=1,\dots,L$ and $d\geq 0$, we have $H_{\bar a}^i(d)\leq H_{\bar \mu}^i(d)$. When $i=0$, we have
$$H_{\bar a}^0(d)=n\cdot\binom{m-1+d}{d}= H_{\bar \mu}^0(d).$$
Now, when $d<f(i+1)=\deg a_{i+1}=\deg \mu_{i+1}$, we have 
$$H_{\bar a}^{i+1}(d)=H_{\bar a}^{i}(d)\leq H_{\bar \mu}^i(d)=H_{\bar \mu}^{i+1}(d).$$
For $d=f(i+1)$, we have
$$H_{\bar a}^{i+1}(d)=H_{\bar a}^i(d)-1\leq H_{\bar \mu}^i(d)-1=H_{\bar \mu}^{i+1}(d).$$
Now let $d\geq f(i+1)$. By Macaulay's theorem on the Hilbert-Samuel function (see \eqref{Mac1}), we have that
\begin{equation}\label{usep1}
H_{\bar a}^{i+1,j}(d+1)\leq H_{\bar a}^{i+1,j}(d)^{\langle d\rangle},
\end{equation}
Since for each $j=1,\dots,n$ and $i=1,\dots, L_j$, the sequence $(\mu^{(j)}_1,\dots,\mu^{(j)}_i)$  is compressed, the theorem of Macaulay also yields (see \eqref{Mac2}) 
\begin{equation}\label{usep2}
H_{\bar \mu}^{i+1,j}(d+1)=H_{\bar \mu}^{i+1,j}(d)^{\langle d\rangle}.
\end{equation}
It then follows, by induction on $d\geq f(i+1)$ and using Lemma \ref{techlem1}, that 
\begin{equation}\label{usep3}
H_{\bar a}^{i+1,1}(d)^{\langle d\rangle}+\cdots+H_{\bar a}^{i+1,n}(d)^{\langle d\rangle}\leq H_{\bar \mu}^{i+1,1}(d)^{\langle d\rangle}+\cdots +H_{\bar \mu}^{i+1,n}(d)^{\langle d\rangle}.
\end{equation}
Thus, putting \eqref{inth}, \eqref{usep1}, \eqref{usep2}, and \eqref{usep3} together, we get
$$H_{\bar a}^{i+1}(d+1)\leq H_{\bar \mu}^{i+1}(d+1).$$
This proves our claim. The result now follows as in the discussion prior to Theorem~\ref{thebound}.
\end{proof}

By Proposition \ref{forn} and the recursive construction of $\bar \mu$, we obtain

\begin{corollary}\label{cor:bound}
If $f$ is an increasing function, then
$$\L_{f,m}^n=\psi_n$$
where $\psi_0=0$ and $\psi_{i+1}=\Psi_{f_{i},m}(1,(f_i(1),0,\dots, 0))+\psi_i$, for $i\geq0$ . We recall that $f_i(x)=f(x+\psi_i)$.
\end{corollary}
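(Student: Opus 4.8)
The plan is to reduce Corollary~\ref{cor:bound} to two ingredients that are already in hand: Proposition~\ref{forn}, which tells us that $\L_{f,m}^n = L_1 + \cdots + L_n$ for the specific antichain sequence $\bar\mu$ constructed as the concatenation of the blocks $\bar\mu^{(1)},\dots,\bar\mu^{(n)}$, and Corollary~\ref{corn1}, which computes the length of a single maximal antichain sequence of $\ZN^m$ with degree growth bounded by an increasing function $g$ as $\Psi_{g,m}(1,(g(1),0,\dots,0))$. So the whole statement is a bookkeeping matter: identify the bounding function governing each block $\bar\mu^{(j)}$, apply Corollary~\ref{corn1} to it, and sum.

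First I would recall from the construction preceding Proposition~\ref{forn} that $\bar\mu^{(j)}$ is the maximal antichain sequence in $\ZN^m\times\{j-1\}$ with degree growth bounded by $f_{j-1}(x) = f(x + L_1 + \cdots + L_{j-1})$. Since this sequence lives in a single copy of $\ZN^m$, identifying $\ZN^m\times\{j-1\}$ with $\ZN^m$ turns $\bar\mu^{(j)}$ into exactly the sequence constructed in \eqref{rec0} for the increasing function $f_{j-1}$, whose length, by Theorem~\ref{thebound} and Corollary~\ref{corn1}, is $L_j = \Psi_{f_{j-1},m}(1,(f_{j-1}(1),0,\dots,0))$. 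Then I would define $\psi_i := L_1 + \cdots + L_i$ (with $\psi_0 = 0$), observe that $L_1 + \cdots + L_{j-1} = \psi_{j-1}$, so that $f_{j-1}(x) = f(x + \psi_{j-1})$ matches the definition $f_i(x) = f(x+\psi_i)$ in the statement (with the index shifted by one), and hence $\psi_{i+1} = \psi_i + L_{i+1} = \psi_i + \Psi_{f_i,m}(1,(f_i(1),0,\dots,0))$, which is precisely the claimed recursion. Finally, Proposition~\ref{forn} gives $\L_{f,m}^n = L_1 + \cdots + L_n = \psi_n$, completing the proof.

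The only mild point of care — and the closest thing to an obstacle — is making sure each $f_{j-1}$ is genuinely increasing (so that Corollary~\ref{corn1} applies), which is immediate since $f$ is increasing and $f_{j-1}$ is a shift of $f$, and making sure the index conventions in the recursion line up: the block $\bar\mu^{(j)}$ uses the shift by $\psi_{j-1}$, whereas the statement writes $f_i(x) = f(x+\psi_i)$, so one must check that $L_{i+1}$ is computed with $f_i$ (shift $\psi_i$), which is exactly $f_{(i+1)-1}$ in the block notation. No heavy computation is involved; the argument is essentially a translation between the two pieces of notation.
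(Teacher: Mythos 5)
Your proposal is correct and follows the same route as the paper, which obtains the corollary directly from Proposition~\ref{forn} together with the recursive construction of $\bar\mu$ (each block length $L_{i+1}$ being $\Psi_{f_i,m}(1,(f_i(1),0,\dots,0))$ via Corollary~\ref{corn1}). Your explicit handling of the index shift $f_{j-1}(x)=f(x+\psi_{j-1})$ and the observation that shifts of an increasing $f$ remain increasing just spell out the bookkeeping the paper leaves implicit.
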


By Remark \ref{fortwo}, when $m=2$, we get precisely \eqref{eq22}, as expected.

\begin{example}\label{ex:app}
Again, for the applications to the effective differential Nullstellensatz we consider $f(i)=2^i\ell$ and, in this case, we write $\L_{\ell,m}^n$ instead of $\L_{f,m}^n$. 
\begin{enumerate}
 \item When $m=2$ and $n=1$, we get $\L_{\ell,2}^1=2\ell+1$. So, in this case, the value of $T$ is 
$$T=2^{2\ell +2}\ell.$$

\item More generally, when $m=2$, we get $\L_{\ell,2}^n=b_n$ where $b_0=0$ and $b_{i+1}=2^{b_i +1}\ell +b_i +1$. For instance,
$$\L_{\ell,2}^2=2^{2\ell+2}\ell +2\ell+2 \text{ and } \L_{\ell,2}^3= 2^{2^{2\ell+2}+2\ell+3}\ell+2^{2\ell +2}+2\ell+3.$$

\item For $m=3$, up until now no explicit values of $T$ were known. Let $n=1$ and $\ell=1$, then 
$$\L_{1,3}^1=\Psi_{f,3}(1,(2,0,0))=70,$$
and so, in this case, $T=2^{71}$. Note that the antichain sequence of maximal length built in \eqref{rec0} takes the form
\vspace{.1in}

\noindent $(2,0,0)$

\vspace{.05in}
\noindent $(1,3,0), (1,2,5), (1,1, 14), (1,0,31)$ 

\vspace{.05in}
\noindent $(0,64,0), (0,63,2^7 -63), (0,62, 2^8-62),\ldots , (0,1,2^{69}-1), (0,0,2^{70})$
\vspace{.1in}
 

\item For $m=3$, $n=1$ and $\ell=2$, we get $\L_{2,3}^1= 2^{2^{520}+520}+2^{520}+519$. So the value of $T$ is
$$T=2^{2^{2^{520}+520}+2^{520}+521}.$$
In this case, the antichain of maximal length takes the form
\vspace{.1in}

\noindent $(4,0,0)$ 

\vspace{.05in}
\noindent $(3,5,0), (3,4,9), \dots, (3,0,2^8-3)$ 

\vspace{.05in}
\noindent $(2,2^{9}-2,0), (2,2^9-3,2^{10}-2^9+1),\dots, (2,0,2^{2^9 +7}-2)$ 

\vspace{.05in}
\noindent $(1,2^{2^9 +8}-1,0),(1,2^{2^9+8}-2,2^{2^9+9}-2^{2^9+8}+1),\dots,(1,0,2^{2^{2^9+8}+2^9+7}-1)$ 

\vspace{.05in}
\noindent $(0,2^{2^{2^9+8}+2^9+8},0),(0,2^{2^{2^9+8}+2^9+8}-1,2^{2^{2^9+8}+2^9+9}-2^{2^{2^9+8}+2^9+8}+1),\dots,(0,0,2^{2^{2^{2^9+8}2^9+8}+2^{2^9+8}+2^9+8})$ 

\end{enumerate}
\end{example}

\setlength{\bibsep}{6pt}
\bibliographystyle{model1b-num-names}
\bibliography{bibdata}
\end{document}